\theoremstyle{plain}  
\newtheorem{theorem}{Theorem}[section]
\newtheorem*{theorem*}{Theorem}
\newtheorem{corollary}[theorem]{Corollary}
\newtheorem{lemma}[theorem]{Lemma}
\newtheorem{proposition}[theorem]{Proposition}
\newtheorem{definition}[theorem]{Definition}
\newtheorem*{claim*}{Claim}
\numberwithin{equation}{section}
\newcommand{\R}{\mathbb{R}}
\newcommand{\Z}{\mathbb{Z}}
\newcommand{\C}{\mathbb{C}}
\newcommand{\Hq}{\mathbb{H}}
\newcommand{\Oc}{\mathbb{O}}
\DeclareMathOperator{\cat}{\mathrm{cat}}
\DeclareMathOperator{\ev}{\mathrm{ev}}
\DeclareMathOperator{\cuplength}{\mathrm{cuplength}}
\def\disfrac#1#2{{\displaystyle{\frac{#1}{#2}  }}}
\newcommand{\bdot}{\boldsymbol{\cdot}} 
\newcommand{\Gr}{\mathrm{Gr}} 
\newcommand{\RP}{\mathbb{R}\mathrm{P}} 
\definecolor{MyBlue}{RGB}{0,0,255}
\definecolor{MyRed}{RGB}{255,0,0}
\definecolor{MyGray}{RGB}{150,60,60}
\def\bw#1\ew{\textcolor{brown}{#1}} 
\def\bb#1\eb{\textcolor{blue}{#1}} 
\def\br#1\er{\textcolor{red}{#1}} 
\def\bm#1\em{\textcolor{magenta}{#1}}
\def\bv#1\ev{\textcolor{olive}{#1}}
\begin{document}
	\title{Maps from Grassmannians of 2-planes to projective spaces}

	\author[R. Brasil,\quad A.C. Ferreira \quad \& \quad L. Vandembroucq]{Ricardo Brasil,\, Ana Cristina Ferreira \, \& \, Lucile Vandembroucq}
	\address{\hspace{-5mm} Ricardo Brasil, Ana Cristina Ferreira \& Lucile Vandembroucq; Centro de Matem\'{a}tica,
		Universidade do Minho,
		Campus de Gualtar,
		4710-057 Braga,
		Portugal} 
	\email {ricardodacostabrasil@gmail.com, anaferreira@math.uminho.pt, lucile@math.uminho.pt }

	\keywords{Grassmannians, Quaternions and octonions, Submersions, Lusternik-Schnirelmann category.}
	
	\subjclass[2020]{57R19, 57S25, 55M30, 53C30.}
    
	
	\date{\today}

	\begin{abstract}
		
		Using quaternions and octonions, we construct some maps from the Grassmannian of $2$-dimensional planes of $\R^n$, $\Gr_2(\R^n)$, to the projective space $\RP^k$, for certain values of $n$ and $k$.  All of our maps induce an isomorphism at the level of fundamental groups, and two of them are shown to be submersions. As an application, we obtain new estimates of the Lusternik-Schnirelmann category of $\Gr_2(\R^n)$ for specific values of $n$.
		
	\end{abstract}
	
	\bigskip

	\maketitle

	
	\section{Introduction}

Let $n> d\geq 1$ be integers. We denote by $\Gr_d(\R^n)$ the real Grassmannian of $d$-dimensional linear subspaces of $\R^n$. As is well known, $\Gr_d(\R^n)$ is a manifold of dimension $d(n-d)$
and, by taking the orthogonal subspace, we have a homeomorphism (in fact, a diffeomorphism) $\Gr_d(\R^n)\cong \Gr_{n-d}(\R^n)$. 
When $d=1$, $\Gr_1(\R^n)$ is exactly the projective space $\RP^{n-1}$. We note that, when $d=2$ and $n=3$, the homeomorphism  $\Gr_2(\R^3)\cong\RP^2$ can be described through the cross product of $\R^3$.

The goal of this article is to construct maps from $\Gr_2(\R^n)$ to a certain projective space $\RP^k$ which induce an isomorphism between the fundamental groups, in short a \textit{$\pi_1$-isomorphism}. Recall that, for $n\geq 3$ and $k\geq 2$, we have $\pi_1(\Gr_2(\R^n))=\pi_1(\RP^k)=\Z_2$. Of course, such a map always exists. We can indeed consider the classifying map of the universal cover $\Gr_2(\R^n)\to B\Z_2=\RP^{\infty}$. This map induces a $\pi_1$-isomorphism and, for dimensional reasons, lifts up to homotopy into the $2(n-2)$-skeleton of $\RP^{\infty}$. We then get a map $\Gr_2(\R^n)\to \RP^{2(n-2)}$ which induces a $\pi_1$-isomorphism. We are therefore mainly interested in constructing maps $\Gr_2(\R^n)\to \RP^{k}$ inducing a $\pi_1$-isomorphism with $2\leq k<2(n-2)$. Remark that this is equivalent to constructing $\Z_2$-equivariant maps from the oriented Grassmannian ${\Gr^+_2}(\R^n)$ to $S^k$, where $S^k$ is given with the antipodal action. In \cite{KS91}, Korba\v{s} 
and Sankaran have discussed the existence of $\Z_2$-equivariant maps between oriented Grassmannians. From their work, we can state the following results regarding the particular case in interest here:
\begin{theorem}[\cite{KS91}] \label{KS}  Let $2^s<n\leq 2^{s+1}$ where $s\geq 1$.
	\begin{enumerate}
		\item There does not exist any map $\Gr_2(\R^n)\to \RP^{k}$ inducing a $\pi_1$-isomorphism if $k<2^{s+1}-2$.
		\item There does not exist any map $\Gr_2(\R^{2^s+1})\to \RP^{2n-5}$ inducing a $\pi_1$-isomorphism.
		\item There exist maps $\Gr_2(\R^4)\to \RP^{2}$ and $\Gr_2(\R^7)\to \RP^{6}$ inducing a $\pi_1$-isomorphism.
		\item There exist maps $\Gr_2(\R^n)\to \RP^{k}$ inducing a $\pi_1$-isomorphism whenever $(n,k)$ belongs to $\{(3,2),(j,6), 3\leq j\leq 6\}$.
	\end{enumerate}
\end{theorem}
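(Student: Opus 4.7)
The plan is to separate the four parts: (1) and (2) are non-existence results amenable to $\Z_2$-cohomological obstruction theory, while (3) and (4) are existence results proven by explicit division-algebra constructions.

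For part (1), suppose $f\colon \Gr_2(\R^n) \to \RP^k$ induces an isomorphism on $\pi_1$. Then on $\Z_2$-cohomology, $f^*$ sends the generator $a \in H^1(\RP^k;\Z_2)$ to the first Stiefel--Whitney class $w_1 \in H^1(\Gr_2(\R^n);\Z_2)$ of the tautological $2$-plane bundle, and since $a^{k+1}=0$ this forces $w_1^{k+1}=0$ inside the Borel ring $H^*(\Gr_2(\R^n);\Z_2) \cong \Z_2[w_1,w_2]/(\bar w_{n-1},\bar w_n)$. The technical core is then the classical height computation: manipulating the dual Stiefel--Whitney relations, one shows that the $\Z_2$-height of $w_1$ is exactly $2^{s+1}-2$ when $2^s < n \leq 2^{s+1}$. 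Part (2) then follows as the special case $n = 2^s+1$, $k = 2n-5 = 2^{s+1}-3 < 2^{s+1}-2$, which already lies below the height bound.

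For part (3), I would construct the maps explicitly using the remaining two normed division algebras. The map $\Gr_2(\R^7)\to\RP^6$ uses the octonionic cross product: identify $\R^7$ with the imaginary octonions $\mathrm{Im}(\Oc)$ and send an oriented orthonormal basis $(u,v)$ of a $2$-plane to $uv \in S^6 \subset \mathrm{Im}(\Oc)$. Using the identities $u^2 = v^2 = -1$ and $vu = -uv$ valid for orthogonal imaginary octonions, a direct bilinear computation shows $uv$ is invariant under $\mathrm{SO}(2)$ change of orthonormal basis and changes sign under orientation reversal; hence we have a $\Z_2$-equivariant map $\Gr_2^+(\R^7)\to S^6$ descending to $\Gr_2(\R^7)\to\RP^6$. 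The map $\Gr_2(\R^4)\to\RP^2$ uses the exceptional diffeomorphism $\Gr_2^+(\R^4) \cong S^2\times S^2$ coming from the splitting $\Lambda^2\R^4 \cong \Lambda^2_+ \oplus \Lambda^2_-$; projection onto either factor is equivariant for the diagonal antipodal action and descends to the desired map. In both cases, the $\pi_1$-isomorphism property reduces to verifying that the pullback of the antipodal cover $S^k\to\RP^k$ is precisely the orientation cover $\Gr_2^+(\R^n)\to\Gr_2(\R^n)$, which is immediate from the commuting equivariant lifts.

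For part (4), the case $(3,2)$ is the classical diffeomorphism $\Gr_2(\R^3)\cong\RP^2$ given by orthogonal complement. The cases $(j,6)$ with $3 \leq j \leq 6$ follow by composing the octonionic map from (3) with the natural inclusion $\Gr_2(\R^j)\hookrightarrow\Gr_2(\R^7)$, which induces an isomorphism on $\pi_1$. The main technical obstacle is the sharp height computation underlying part (1); once the correct division-algebra structures are in place, the existence constructions are geometrically transparent and the $\pi_1$-isomorphism checks all reduce to the same identification of the orientation double cover as a pullback.
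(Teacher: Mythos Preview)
Your proposal is correct and matches the paper's account. Note that the paper does not itself prove this theorem: it is quoted from Korba\v{s}--Sankaran \cite{KS91}, and the paper only summarizes the methods in one paragraph --- cohomological obstructions for (1)--(2), the identification $\Gr_2(\R^4)\cong (S^2\times S^2)/(u,v)\sim(-u,-v)$ together with the $\R^7$ vector product for (3), and precomposition with inclusions for (4). Your constructions in (3) and (4) are exactly these, phrased in slightly different language (the self-dual/anti-self-dual splitting of $\Lambda^2\R^4$ is one standard way to obtain the $S^2\times S^2$ identification, and the $\R^7$ vector product \emph{is} the imaginary-octonion cross product).

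One small remark: your derivation of (2) as an immediate corollary of (1) is valid as stated, since for $n=2^s+1$ one has $2n-5=2^{s+1}-3<2^{s+1}-2$, so (2) indeed falls under the height bound. The paper lists (2) separately without comment, presumably because it corresponds to a separate statement in \cite{KS91}; but logically your reduction is fine. For (4), the paper obtains the case $(3,2)$ by restricting the map $\Gr_2(\R^4)\to\RP^2$ rather than by the direct diffeomorphism $\Gr_2(\R^3)\cong\RP^2$ that you invoke --- both are of course correct.
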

The non-existence results are obtained through cohomological arguments. The map $\Gr_2(\R^4)\to \RP^{2}$ is obtained through an identification of $\Gr_2(\R^4)$ with the space $(S^2\times S^2)/(u,v)\sim (-u,-v)$ while the map $\Gr_2(\R^7)\to \RP^{6}$ is constructed using the vector product of $\R^7$. The maps in the last item are immediately obtained from those of item $(3)$ by precomposition with the inclusion $\Gr_2(\R^{n-1})\to \Gr_2(\R^n)$.\\

In this paper, we go further in the positive direction. More precisely, we extend the list of positive cases of Theorem \ref{KS} in the following way:

\begin{theorem} \label{main} 
	There exist maps $\Gr_2(\R^4)\to \RP^{2}$, $\Gr_2(\R^8)\to \RP^{6}$, $\Gr_2(\R^9)\to \RP^{14}$, $\Gr_2(\R^{10})\to \RP^{15}$, $\Gr_2(\R^{12})\to \RP^{17}$ and $\Gr_2(\R^{16})\to \RP^{21}$ inducing a $\pi_1$-isomorphism.
	Any precomposition of such a map with an inclusion $\Gr_2(\R^{n-1})\to \Gr_2(\R^n)$ where $n-1>2$ induces a $\pi_1$-isomorphism.	
\end{theorem}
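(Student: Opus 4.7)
For each of the six pairs $(n,k)$, the plan is to build an explicit formula using the multiplications in $\Hq=\R^4$ and/or $\Oc=\R^8$ which generalises the two known constructions recalled in Theorem~\ref{KS}. Given an oriented plane $P\subset\R^n$ with orthonormal basis $(u,v)$, I would construct a map $\Phi(u,v)\in\R^{k+1}$ satisfying: (a) invariance under $SO(2)$-change of orthonormal basis, so $\Phi$ descends to $\Gr_2^+(\R^n)$; (b) non-vanishing; (c) sign-reversal under orientation reversal of $P$. After normalising, $\Phi/|\Phi|:\Gr_2^+(\R^n)\to S^k$ is $\Z_2$-equivariant and descends to $f:\Gr_2(\R^n)\to\RP^k$. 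Condition (c) immediately forces $f_*$ to be a $\pi_1$-isomorphism: lifting a non-trivial loop in $\Gr_2(\R^n)$ to a path in $\Gr_2^+(\R^n)$ from a base point to its $\Z_2$-image, $\Phi/|\Phi|$ sends this to a path from a point of $S^k$ to its antipode, whose projection is the non-trivial generator of $\pi_1(\RP^k)=\Z_2$.

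\emph{Cases $n=4$ and $n=8$.} Identify $\R^n$ with $\Hq$ or $\Oc$ and take $\Phi(u,v)=u\bar v$. Orthonormality gives $u\bar v+v\bar u=2\,\mathrm{Re}(u\bar v)=2\langle u,v\rangle=0$, so $\Phi$ lands in $\mathrm{Im}(\Hq)\cong\R^3$, respectively $\mathrm{Im}(\Oc)\cong\R^7$, producing targets $\RP^2$ and $\RP^6$. Multiplicativity of the norm gives $|\Phi|=1$, and orientation-reversal swaps $u\bar v\leftrightarrow v\bar u=-u\bar v$. The $SO(2)$-invariance is a short direct expansion that uses only distributivity and $u\bar u=v\bar v=1$; it involves no triple product of non-scalar octonions and therefore goes through for $\Oc$ in spite of non-associativity.

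\emph{Cases $n=9,10,12,16$.} Decompose $\R^n$ as $\R\oplus\Oc$, $\R^2\oplus\Oc$, $\Hq\oplus\Oc$, and $\Oc\oplus\Oc$ respectively, and write $u=(u_1,u_2)$, $v=(v_1,v_2)$ accordingly. Build $\Phi$ by combining the basic octonionic piece $u_2\bar v_2$ (or its antisymmetrisation $u_2\bar v_2-v_2\bar u_2$) with mixed terms such as $u_1 v_2-v_1 u_2$ (interpreting the products via the natural $\R$- or $\Hq$-module structure on $\Oc$), together with scalar or quaternionic pieces extracted from the small components. The target dimensions $k+1=15,16,18,22$ should be exactly what is needed to record all the resulting coordinates while preserving $SO(2)$-invariance and non-vanishing.

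Verifying (a)--(c) in these four cases is the heart of the proof and the main obstacle. The non-associativity of $\Oc$ means that the triple octonionic products appearing in the $SO(2)$-expansion must be controlled using alternative-algebra identities of Moufang type, which sharply constrains the admissible formulas; this is precisely why the target dimensions $k$ take these specific values rather than smaller ones. Non-vanishing of $\Phi$ will typically follow by arranging the summands into mutually orthogonal coordinate blocks, so that the norms add. Finally, the precomposition statement is standard: the inclusion $\Gr_2(\R^{n-1})\hookrightarrow\Gr_2(\R^n)$ is a $\pi_1$-isomorphism for $n-1\geq 3$, since the corresponding inclusion of the orientation double covers $\Gr_2^+(\R^{n-1})\hookrightarrow\Gr_2^+(\R^n)$ is a map between simply connected spaces, and composition with a $\pi_1$-iso is again a $\pi_1$-iso.
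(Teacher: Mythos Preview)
Your plan is essentially the paper's approach. The paper uses the antisymmetrised form $\mu(x,y)=\tfrac12(y\bar x-x\bar y)$ on arbitrary bases rather than $u\bar v$ on orthonormal ones, but these agree for orthonormal pairs; for $n=16$ the paper's map is precisely $(\mu_{\Oc}(x_1,y_1),\mu_{\Oc}(x_2,y_2),\,x_1y_2-y_1x_2)\in\R^7\oplus\R^7\oplus\R^8$, and the cases $n=9,10,12$ are obtained by restricting the second $\Oc$-summand to $\R,\C,\Hq$, exactly as you propose.

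One correction worth making: you substantially overestimate the difficulty of the invariance check in the four higher cases. No Moufang-type identities are needed. Each block---$\mu_{\Oc}(x_i,y_i)$ and $x_1y_2-y_1x_2$---is separately bilinear and alternating in the pair $(x,y)\in\R^n\times\R^n$, so under a basis change $(x,y)\mapsto(\alpha x+\beta y,\gamma x+\delta y)$ each block scales by the determinant $\alpha\delta-\beta\gamma$; only distributivity of the octonion product is used, and no triple octonionic products ever appear. Likewise, non-vanishing is a short direct argument rather than a norm computation: if $\mu_{\Oc}(x_i,y_i)=0$ for $i=1,2$ then $y_i=\lambda_i x_i$, and then $x_1y_2-y_1x_2=0$ forces $\lambda_1=\lambda_2$. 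What you flag as ``the heart of the proof and the main obstacle'' is in fact a two-line calculation once the formula is written down; the real content is choosing the formula, which you have done correctly.
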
	
We first construct maps $\nu_4:\Gr_2(\R^4)\to \RP^{2}$ and $\nu_8:\Gr_2(\R^8)\to \RP^{6}$ by using quartenions and octonions, respectively. These maps naturally extend the maps $\Gr_2(\R^3)\to \RP^{2}$ and $\Gr_2(\R^7)\to \RP^{6}$ obtained by using vector products. The remaining maps are obtained by appropriate extensions and restrictions in the spirit of \cite{Lam}. We also study the maps $\nu_4$ and $\nu_8$  more deeply.  We develop some results that allow us to understand better the structure of these maps and we establish, in particular, the following result:
\begin{theorem} \label{submersions} 
	The maps $\nu_4:\Gr_2(\R^4)\to \RP^{2}$ and $\nu_8:\Gr_2(\R^8)\to \RP^{6}$ are submersions.
\end{theorem}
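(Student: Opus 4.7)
The plan is to lift $\nu_n$ to the oriented Grassmannian and then pull back along a diffeomorphism that identifies the Stiefel manifold $V_2(\R^n)$ with a product of spheres, turning $\nu_n$ into a plain projection.

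First I would lift $\nu_n$ to $\tilde{\nu}_n : \Gr_2^+(\R^n) \to S^{n-2}$ by sending an oriented plane with positive orthonormal basis $(u,v)$ to $\bar{u}v$: this lies in $\mathrm{Im}\,\Hq$ (resp.\ $\mathrm{Im}\,\Oc$) and has unit norm, since $u\perp v$ gives $\bar u v + \bar v u = 2\langle u, v\rangle = 0$ while $|\bar u v| = |u||v| = 1$. The covering maps $\Gr_2^+(\R^n)\to \Gr_2(\R^n)$ and $S^{n-2}\to \RP^{n-2}$ are local diffeomorphisms, so it suffices to show that $\tilde{\nu}_n$ is a submersion.

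The key step is to introduce
\[
\psi_n : S^{n-1} \times S^{n-2} \to V_2(\R^n), \qquad \psi_n(u,p) := (u, up),
\]
where $S^{n-2}$ denotes the unit sphere in $\mathrm{Im}\,\Hq$ (resp.\ $\mathrm{Im}\,\Oc$). Using $\bar p = -p$ together with the flexible law, one checks $\langle u, up\rangle = 0$ and $|up|=1$, so $\psi_n$ indeed lands in the Stiefel manifold; the candidate inverse $(u,v)\mapsto (u,\bar u v)$ is smooth, and the identity $u(\bar u v) = v$ (which uses only the alternative law applied to the pair $u,\bar u$) makes these mutual inverses, so $\psi_n$ is a diffeomorphism. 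Composing with the standard principal $SO(2)$-bundle $q: V_2(\R^n)\to \Gr_2^+(\R^n)$ yields a smooth surjection $\phi_n := q\circ \psi_n$, and the crucial identity $\tilde{\nu}_n \circ \phi_n = \pi_2$ (projection onto the second factor) follows from $\bar u(up) = (\bar u u)p = p$, again via the alternative law. Since $\pi_2$ is manifestly a submersion and $\phi_n$ is surjective, the factorisation $\pi_2 = \tilde\nu_n\circ\phi_n$ forces $\tilde\nu_n$, and hence $\nu_n$, to be a submersion.

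The main obstacle is ensuring that every manipulation survives the non-associativity of $\Oc$. All the required identities, namely $\bar u v + \bar v u = 2\langle u,v\rangle$, $\bar u(uv) = v$ for unit $u$, and $\langle u, up\rangle = 0$ for pure imaginary $p$, follow from Artin's theorem on alternative algebras combined with $\bar u = 2\mathrm{Re}(u) - u$; no deeper Moufang-type identity is required, and the quaternionic case then falls out as a special instance.
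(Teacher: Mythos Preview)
Your argument has the right shape but contains a slip in the lift. The paper defines $\nu_n(\langle x,y\rangle)=\bigl[\tfrac12(y\bar x-x\bar y)\bigr]$, which for an orthonormal pair $(u,v)$ equals $[v\bar u]$, not $[\bar u v]$. These are not proportional in general (e.g.\ $u=(1+i)/\sqrt2$, $v=j$ in $\Hq$ gives $v\bar u=(j+k)/\sqrt2$ versus $\bar u v=(j-k)/\sqrt2$), so your $\tilde\nu_n$ is not a lift of the paper's $\nu_n$. The fix is immediate: either use $v\bar u$ and set $\psi_n(u,p)=(u,pu)$, so the key identity becomes $(pu)\bar u=p$ via alternativity, or note that conjugation $x\mapsto\bar x$ induces a diffeomorphism of $\Gr_2(\R^n)$ intertwining the two versions. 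With that correction the proof is complete.

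Your route is genuinely different from the paper's. The paper invokes the equivariant rank theorem: $\nu_4$ is $\mathrm{Sp}(1)\mathrm{Sp}(1)\cong\mathrm{SO}(4)$-equivariant (a short computation using associativity of $\Hq$), while for $\nu_8$ equivariance under the transitive $\mathrm{Spin}(7)$-action on $\Gr_2(\R^8)$ is established via the triality description of $\mathrm{Spin}(8)$ over the octonions. Your argument avoids Lie theory altogether; the diffeomorphism $V_2(\R^n)\cong S^{n-1}\times S^{n-2}$ and the factorisation $\tilde\nu_n\circ\phi_n=\pi_2$ need only Artin's theorem, so the quaternion and octonion cases are handled uniformly and more elementarily. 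Interestingly, the paper does construct essentially your map $\phi_8$ (as a homeomorphism, in its Proposition~\ref{prop:v8-fiber}) but uses it only to identify the fibre as $\C\mathrm{P}^3$, not to prove the submersion property. What the paper's approach buys in return is the explicit symmetry: it makes visible that $\nu_8$ is governed by the exceptional chain $G_2\subset\mathrm{Spin}(7)\subset\mathrm{Spin}(8)$, structural information your proof does not surface.
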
	
As a consequence of this study, $\nu_4$ is, up to homeomorphism, the same as the map described in \cite{KS91} and is a sphere bundle with fibre $S^2$ while $\nu_8$ is a locally trivial fibration with fibre $\mathbb{C}P^3$.  \\

%

Some of the maps described in Theorem \ref{main} permit us to obtain new results on the Lusternik-Schnirelmann of $\Gr_2(\R^n)$. Recall that the (normalized) Lusternik-Schnirelmann category of $X$, $\cat(X)$, is the least integer $k$ such that there exists a cover of $X$ given by $k+1$ open sets $U_i$ of $X$ which are contractible in $X$, that is, the inclusion $U_i\hookrightarrow X$ is homotopically trivial for each $i=0,...,k$. If no such cover exists, one sets $\cat(X)=\infty$. 

Let $n\geq 3$. To our knowledge, the best estimates of $\cat(\Gr_2(\R^n))$ have been obtained by classical results on the LS-category together with specific results due to Hiller \cite{Hiller} and Berstein \cite{Berstein} more than four decades ago (see Section \ref{sec:LS-cat} for more details) and can be stated as follows. For $2^s<n\leq 2^{s+1}$ where $s\geq 1$, we have
\[\left\{\begin{array}{ll}
\cat(\Gr_2(\R^n))=n+2^s-3 &\mbox{ if } n\in\{2^s+1,2^s+2\}, \\
  n+2^s-3\leq \cat(\Gr_2(\R^n))\leq 2n-5 &\mbox{ if } 2^s+3\leq n\leq 2^{s+1}.
\end{array}\right.\] 
In particular, $\cat(\Gr_2(\R^n))$ is completely known for $3\leq n\leq 6$ (note that $\cat(\Gr_2(\R^2))=0$ since $\Gr_2(\R^2)$ is contractible). However, for $n\in \{7,8\}$, the previous results yield
\[8\leq \cat(\Gr_2(\R^7))\leq 9 \quad \mbox{and} \quad 9\leq \cat(\Gr_2(\R^8))\leq 11.\]

Using some maps from Theorem \ref{main} together with a theorem due to Dranishnikov \cite{Dranish}, we are able to obtain better estimates for some values of $n$. In particular, using the map $G_2(\R^8)\to \RP^6$, we obtain the following two exact values.

\begin{theorem}
We have  $\cat(\Gr_2(\R^7))=8$ and $\cat(\Gr_2(\R^8))=9$.
\end{theorem}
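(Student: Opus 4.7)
The lower bounds $\cat(\Gr_2(\R^7))\geq 8$ and $\cat(\Gr_2(\R^8))\geq 9$ are already on the table, being the Hiller cup-length bounds recorded in the display preceding the statement. So the task will be to sharpen the Berstein upper bounds $\cat(\Gr_2(\R^7))\leq 9$ and $\cat(\Gr_2(\R^8))\leq 11$ by $1$ and $2$ respectively.

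My plan is to combine Theorem \ref{main} with Dranishnikov's theorem. First I would extract from Theorem \ref{main} the map $\nu_8\colon\Gr_2(\R^8)\to\RP^6$, which induces an isomorphism on $\pi_1$, together with its precomposition with the inclusion $\Gr_2(\R^7)\hookrightarrow\Gr_2(\R^8)$; by the second assertion of Theorem \ref{main} this restriction also induces a $\pi_1$-isomorphism. These two $\pi_1$-isomorphisms are exactly what is needed to feed into Dranishnikov's theorem.

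I would then invoke the Dranishnikov estimate \cite{Dranish} in the following form: for a $\pi_1$-isomorphism $f\colon M^n\to Y$ from a closed $n$-manifold to a CW complex of dimension $k$, one has
\[
\cat(M)\;\leq\; k + \left\lceil (n-k)/2\right\rceil.
\]
Specialising to $Y=\RP^6$ (so that $k=6=\cat(\RP^6)$) and to the dimensions $\dim\Gr_2(\R^7)=10$ and $\dim\Gr_2(\R^8)=12$ yields $\cat(\Gr_2(\R^7))\leq 6+\lceil 4/2\rceil = 8$ and $\cat(\Gr_2(\R^8))\leq 6+\lceil 6/2\rceil = 9$, matching the lower bounds and closing both cases.

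The only real obstacle will be checking that the hypotheses of \cite{Dranish} apply to our particular maps; the key input — the $\pi_1$-isomorphism property — is granted by Theorem \ref{main}, and the dimensional data are immediate. Once these pieces are aligned, the proof reduces to a direct substitution, so the argument is essentially a clean assembly of Theorem \ref{main} and Dranishnikov's inequality rather than a new calculation.
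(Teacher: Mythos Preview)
Your proposal is correct and follows essentially the same route as the paper: combine the Hiller cup-length lower bounds with Dranishnikov's inequality applied to the $\pi_1$-isomorphism $\nu_8\colon\Gr_2(\R^8)\to\RP^6$ and to its restriction to $\Gr_2(\R^7)$. The paper states Dranishnikov's bound in the form $\cat(X)\le\tfrac{\cat(\varphi_X)+\dim X}{2}$ (using $\cat(\varphi_X)\le\cat(\RP^6)=6$) rather than your $k+\lceil(n-k)/2\rceil$, but since $n-k$ is even in both cases the two formulations give the same upper bounds $8$ and $9$.
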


	\section{The map $\nu_4:\Gr_2(\R^4) \longrightarrow \RP^2$}\label{sec:nu4}

As is well-known, the set of quarternions $\Hq$ is equipped with a composition algebra structure over $\R$. The norm of a quaternion $x$ is defined as $||x||^2= x\overline{x}$, where $\overline{x}$ denotes the conjugate of $x$. The scalar product of two quaternions $x,y$ is given by 
    $(x|y) = \frac{1}{2}(x\overline{y}+y\overline{x})=\frac{1}{2}(\overline{x}y+\overline{y}x)$. Also, identifying $\R^3$ with the set of pure quarternions, the usual vector product of two elements $v,v$ of $\R^3$ can be seen as $u\wedge v = \frac{1}{2}(uv-vu)$.

	With the goal of building a map $\Gr_2(\R^4) \longrightarrow \RP^2$ by using the quaternions, we first look at the following one:
	\begin{align*}
		\mu_{\Hq} \colon \Hq \times\Hq &\to \Hq \\
		(x,y) &\mapsto \frac{1}{2}(y \, \Bar{x}-x \, \Bar{y})
	\end{align*}
	
	and note that the image of $\mu_{\Hq}$ is contained in the 3-dimensional subspace of pure quaternions since $\overline{\mu_{\Hq}(x,y)}=-\mu_{\Hq}(x,y)$. This 3-dimensional space will be identified with $\R^3$, as usual. 
	
	We give two alternative characterizations of $\mu_{\Hq}$ that can be easily verified.  
	
	\begin{enumerate}[label=\roman*)]
		\item Seeing $\Hq$ as $\R \oplus \R^3$ and writing $x, y$ as $x_{0}+v, y_{0}+w$ respectively, we have:
		$$\mu_{\Hq}(x,y)=x_{0} \, w-y_{0} \, v+v \wedge w.$$ 
		
		\smallskip
		
		\item Writing $x=(x_{0},x_{1},x_{2},x_{3})$ and $y=(y_{0},y_{1},y_{2},y_{3})$ and considering the Pl\"ucker coordinates $p_{ij}=x_{i}y_{j}-y_{i}x_{j}$ for $0\leq i<j \leq 3$, we get:
		$$\mu_{\Hq}(x,y)=(p_{01}+p_{23},p_{02}-p_{13},p_{03}+p_{12}).$$
	\end{enumerate}
The second characterization will be useful in Sec. \ref{sec:other}.
    
	We can now define the desired map, which we will call $\nu_{4}$, in the expected way:
	\begin{align*}
		\nu_{4} \colon \Gr_{2}(\R^4) &\to \RP^2 \\
		\langle x,y\rangle &\mapsto \left[ \frac{1}{2}(y \, \Bar{x}-x \, \Bar{y}) \right]
	\end{align*}
	Here $\langle x,y\rangle$ is the plane generated by $x,y\in \R^4\cong \Hq$ and $[u]\in \RP^2$ is the class of $u\in \R^3\setminus\{0\}$ with respect to the relation $u\sim \lambda u$, $\lambda \in \R\setminus\{0\}$.  
	In order for this map to be well defined we need to see two things, namely: if we change the basis of the plane of the Grassmannian we should get the same element of the projective space, i.e., a multiple of the first vector and no plane can be sent to the null vector.
	
	\begin{proposition} Let $x,y \in \Hq$, then:
		\begin{enumerate}[label=\roman*)]
			\item $\mu_{\Hq}(\alpha x+ \beta y, \gamma x+ \delta y)=(\alpha \delta-\beta \gamma) \,\mu_{\Hq}(x,y)$, where $\alpha,\beta,\gamma,\delta \in \R$ and $\alpha \delta-\beta \gamma \neq 0$;
			
			\medskip
			
			\item If $\mu_{\Hq}(x,y)=0$ then $x$ and $y$ are linearly dependent.
		\end{enumerate}
		
		\label{nu_well _def}
	\end{proposition}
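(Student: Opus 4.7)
The plan is to exploit the $\R$-bilinear structure of $\mu_{\Hq}$ for part (i), and to characterize its vanishing through a simple reality condition on $y\bar{x}$ for part (ii).

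For (i), I would first observe that $\mu_{\Hq}$ is $\R$-bilinear in $x$ and $y$ (quaternion multiplication is $\R$-bilinear and conjugation is $\R$-linear) and that it is alternating, since $\mu_{\Hq}(x,x) = \tfrac{1}{2}(x\bar x - x\bar x) = 0$. For any alternating $\R$-bilinear map, expanding and using $\mu(x,x)=\mu(y,y)=0$ and $\mu(y,x)=-\mu(x,y)$ gives
\[ \mu_{\Hq}(\alpha x + \beta y,\, \gamma x + \delta y) = \alpha\delta\,\mu_{\Hq}(x,y) + \beta\gamma\,\mu_{\Hq}(y,x) = (\alpha\delta - \beta\gamma)\,\mu_{\Hq}(x,y). \]
The hypothesis $\alpha\delta-\beta\gamma\neq 0$ plays no role in this identity itself; it is only needed afterwards to guarantee that the two vectors still span the same plane. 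As a cross-check, characterization (ii) in the text writes $\mu_{\Hq}$ as a linear combination of the Plücker coordinates $p_{ij}$, which are already known to transform by the determinant under a change of basis.

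For (ii), the key idea is that $\mu_{\Hq}(x,y)=0$ is equivalent to $y\bar x = x\bar y$, and since $x\bar y = \overline{y\bar x}$, this says exactly that the quaternion $y\bar x$ is equal to its own conjugate, hence is a real number. If $x = 0$ then $x,y$ are trivially linearly dependent. If $x\neq 0$, I would multiply the relation $y\bar x\in\R$ on the right by $x$ and divide by $\|x\|^2 = x\bar x$ to obtain $y = \lambda x$ with $\lambda = (y\bar x)/\|x\|^2 \in \R$, which gives the desired linear dependence.

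The argument is essentially mechanical and the only minor point requiring care is to dispose of the case $x=0$ before dividing by $\|x\|^2$ in part (ii). There is no deeper obstacle here, which is reassuring, since this proposition is precisely what ensures that the forthcoming definition of $\nu_4$ is well posed (change-of-basis invariance in the Grassmannian, and avoidance of the zero vector).
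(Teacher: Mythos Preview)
Your proof is correct. Part (i) is essentially the paper's argument recast in the language of alternating $\R$-bilinear maps; the paper just expands the product directly, and both amount to the same cancellation.

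Part (ii), however, takes a genuinely different route from the paper. The paper proceeds by computing the norm and establishing the identity $\|\mu_{\Hq}(x,y)\|^2 = \|x\|^2\|y\|^2 - (x|y)^2$, then invoking the equality case of Cauchy--Schwarz. Your argument is more direct: from $y\bar x = x\bar y = \overline{y\bar x}$ you conclude $y\bar x\in\R$, and then (after handling $x=0$) multiply by $x$ and use $\bar x x = \|x\|^2$ to get $y=\lambda x$. This is shorter and avoids any norm computation; it also transfers verbatim to the octonionic case in Proposition~\ref{nu8_well _def}, since the only products involved lie in the associative subalgebra $\R(x,y)$. The paper's approach, on the other hand, has the advantage of yielding the explicit norm formula, which resurfaces later (Section~\ref{sec:other}) when relating $\mu_{\Hq}$ and $\mu_{\Oc}$ to the Lagrange identity and Pl\"ucker coordinates.
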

	
	\begin{proof}
		Let $x,y\in \Hq$.
		\begin{enumerate}[label=\roman*)]
			
			\item Considering $\alpha,\beta,\gamma,\delta$ in the intended conditions, we have:
			$\mu_{\Hq}(\alpha x+ \beta y, \gamma x+ \delta y)=\frac{1}{2}(( \gamma x+ \delta y)(\alpha \Bar{x}+ \beta \Bar{y})-(\alpha x+ \beta y)( \gamma \Bar{x}+ \delta \Bar{y}))=(\alpha \delta-\beta \gamma) \,\frac{1}{2}(y \Bar{x}-x \Bar{y})=(\alpha \delta-\beta \gamma) \,\mu_{\Hq}(x,y)$.
			
			\medskip
			
			\item If $\mu_{\Hq}(x,y)=0$ then $||\mu_{\Hq}(x,y)||=0$. We have that $||\mu_{\Hq}(x,y)||^2= \mu_{\Hq}(x,y)\overline{\mu_{\Hq}(x,y)} = \frac{1}{2}||x||^2||y||^2-\frac{1}{4}((y\overline{x})^2+(x\overline{y})^2)$. Now $(x|y)^2= \frac{1}{4}((y\overline{x})^2+(x\overline{y})^2) + \frac{1}{2} ||x||^2||y||^2$. Thus, 
$||\mu_{\Hq}(x,y)||^2= ||x||^2||y||^2 -(x|y)^2$. Therefore, 
			$||x||^2||y||^2-(x | y)^2=0$ and, consequently,  $ ||x||^2||y||^2 \sin (\measuredangle(x,y))^2=0$.
			This means that one of three factors is 0. In either case $x,y$ are, indeed, linearly dependent.
		\end{enumerate}
		\end{proof}
	
	In conclusion, changing the basis does not change the outcome and no plane has null image because two linearly dependent vectors do not define a plane. In this way, it is now confirmed that $\nu_{4}$ is a map $\Gr_2(\R^4) \longrightarrow \RP^2$. Note also that, since $\mu_{\Hq}$ is continuous, so is $\nu_{4}$.\\
	
	\begin{proposition}\label{nu4-pi1}
		The map $\nu_{4}$ induces a $\pi_1$-isomorphism.
	\end{proposition}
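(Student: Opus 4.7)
The plan is to promote $\nu_{4}$ to a continuous $\Z_{2}$-equivariant map between the universal double covers $\Gr_{2}^{+}(\R^{4})\to S^{2}$ and then use that both $\pi_{1}(\Gr_{2}(\R^{4}))$ and $\pi_{1}(\RP^{2})$ are $\Z_{2}$, so that $\Z_{2}$-equivariance (i.e.\ compatibility with the deck involutions) forces $(\nu_{4})_{*}$ to be an isomorphism. In other words, a generator of $\pi_{1}(\Gr_{2}(\R^{4}))$ can be represented by a path in $\Gr_{2}^{+}(\R^{4})$ joining a plane to its opposite orientation, and we want its image under the lift to be a path in $S^{2}$ joining antipodal points, which is precisely the non-trivial loop in $\RP^{2}$.

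First, I would restrict $\mu_{\Hq}$ to orthonormal pairs. The norm identity already obtained in the proof of Proposition~\ref{nu_well _def}(ii), namely $||\mu_{\Hq}(x,y)||^{2}=||x||^{2}||y||^{2}-(x|y)^{2}$, shows that if $(x,y)$ is an orthonormal 2-frame in $\R^{4}\cong\Hq$, then $\mu_{\Hq}(x,y)\in S^{2}\subset \R^{3}$. This gives a continuous map from the Stiefel manifold $V_{2}(\R^{4})$ to $S^{2}$.

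Next, I would argue that this map descends to a continuous map $\widetilde{\nu}_{4}\colon \Gr_{2}^{+}(\R^{4})\to S^{2}$. Any change of orthonormal basis within a fixed oriented plane lies in $SO(2)$ and therefore has determinant $+1$; by Proposition~\ref{nu_well _def}(i), $\mu_{\Hq}(x,y)$ is unchanged. Then the $\Z_{2}$-equivariance $\widetilde{\nu}_{4}(-P)=-\widetilde{\nu}_{4}(P)$ (where $-P$ denotes the plane with reversed orientation) follows by applying the same Proposition to an orientation-reversing orthonormal basis change, which has determinant $-1$.

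Finally, since $\Gr_{2}^{+}(\R^{4})\to \Gr_{2}(\R^{4})$ and $S^{2}\to \RP^{2}$ are the universal coverings and $\widetilde{\nu}_{4}$ intertwines the covering involutions, any lift of a loop realising the non-trivial element of $\pi_{1}(\Gr_{2}(\R^{4}))$ connects antipodal points in $\Gr_{2}^{+}(\R^{4})$, so its image under $\widetilde{\nu}_{4}$ connects antipodal points in $S^{2}$, which projects to the generator of $\pi_{1}(\RP^{2})$. I do not expect a genuine obstacle: Proposition~\ref{nu_well _def}(i) already encodes all the multilinear/alternating information needed, and the rest is the standard covering-space translation of equivariance into an isomorphism on $\pi_{1}$.
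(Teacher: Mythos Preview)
Your proof is correct but takes a different route from the paper's. The paper's argument is more direct: it simply exhibits an explicit section $\sigma\colon \RP^{2}\to \Gr_{2}(\R^{4})$ of $\nu_{4}$, sending $[x_{1}:x_{2}:x_{3}]$ to the plane $\langle (1,0,0,0),(0,x_{1},x_{2},x_{3})\rangle$; since $\nu_{4}\circ\sigma=\mathrm{Id}$, the induced map $(\nu_{4})_{*}$ is surjective on $\pi_{1}$, and as both groups are $\Z_{2}$ this forces an isomorphism. Your approach instead lifts $\nu_{4}$ to a $\Z_{2}$-equivariant map $\widetilde{\nu}_{4}\colon \Gr_{2}^{+}(\R^{4})\to S^{2}$ between the simply connected double covers and reads off the $\pi_{1}$-isomorphism from compatibility with the deck involutions. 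The section argument is shorter and entirely elementary, but your covering-space argument is more conceptual and is precisely the viewpoint the paper itself invokes in its introduction (where it observes that producing such $\pi_{1}$-isomorphisms is equivalent to producing $\Z_{2}$-equivariant maps $\Gr_{2}^{+}(\R^{n})\to S^{k}$); it also has the advantage of applying uniformly to the other maps constructed later without having to invent a new section in each case.
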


\begin{proof}
	Writing $[x_1:x_2:x_3]$ for an element of $\RP^2$, we can check that the map $\sigma: \RP^2\to \Gr_{2}(\R^4)$ that takes $[x_1:x_2:x_3]$ to the plane of $\R^4$ generated by $(1,0,0,0)$ and $(0,x_1,x_2,x_3)$ is a section of $\nu_{4}$, that is $\nu_4\circ \sigma={\rm Id}$. Consequently, $(\nu_4)_*: \pi_1(\Gr_2(\R^4))\to \pi_1(\RP^2)$ is an epimorphism and an isomorphism since $\pi_1(\Gr_2(\R^4))=\pi_1(\RP^2)=\Z_2$.
\end{proof}

    As mentioned in the introduction, the map $\Gr_2(\R^4) \longrightarrow \RP^2$ considered by Korba\v{s} and Sankaran is obtained through the identification of $\Gr_2(\R^4)$ with the space $P=(S^2\times S^2)/\sim$ where $\sim $ is given by $(u,v)\sim (-u,-v)$. More precisely, this is the map $p: P  \to \RP^2$ induced by the projection on the first factor. Note that the quotient $P$ is an example of \textit{projective product spaces} in the sense of  \cite{Davis}. In the following proposition, we show that, up to an homeomorphism, the maps $p$ and $\nu_4$ agree. 
	 
	\begin{proposition} \label{prop:homeom-hq}
    There exists a homeomorphism $\zeta$ making the following diagram commutative
    \begin{equation}\label{diag:v4}
	\xymatrix{
	 \Gr_2(\R^4) \ar[dr]_{\nu_{4}}  \ar[rr]^{\zeta} && P \ar[dl]^{p}\\
	&\RP^2	
	}
     \end{equation}
 and $\nu_4:\Gr_2(\R^4) \to \RP^2$ is a sphere bundle with fibre $S^2$.
 \end{proposition}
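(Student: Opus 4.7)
The plan is to use a companion map $\mu'_\Hq : \Hq \times \Hq \to \Hq$ defined by $\mu'_\Hq(x,y) = \frac{1}{2}(\bar{x}y - \bar{y}x)$. It takes values in the pure quaternions (since $\overline{\mu'_\Hq(x,y)} = -\mu'_\Hq(x,y)$) and, by computations entirely parallel to those of Proposition \ref{nu_well _def}, satisfies both the transformation rule $\mu'_\Hq(\alpha x + \beta y, \gamma x + \delta y) = (\alpha\delta - \beta\gamma)\,\mu'_\Hq(x,y)$ and the same norm identity $\|\mu'_\Hq(x,y)\|^2 = \|x\|^2\|y\|^2 - (x|y)^2$. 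In particular, for any oriented orthonormal basis $(x,y)$ of a plane, both $\mu_\Hq(x,y)$ and $\mu'_\Hq(x,y)$ are unit pure quaternions, hence points of $S^2 \subset \R^3$.

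The first step is to lift $\zeta$ to the oriented Grassmannian: define $\widetilde{\zeta}: \Gr^+_2(\R^4) \to S^2 \times S^2$ by $\widetilde{\zeta}(\langle x,y\rangle^+) = (\mu_\Hq(x,y), \mu'_\Hq(x,y))$ on any oriented orthonormal basis. The transformation rule specialized to $SO(2)$ (where $\alpha\delta - \beta\gamma = 1$) makes $\widetilde{\zeta}$ well defined, and reversing the orientation swaps $x$ and $y$, which negates both components by antisymmetry. Hence $\widetilde{\zeta}$ descends to the required $\zeta: \Gr_2(\R^4) \to P$, and commutativity of the triangle is immediate from the definitions: $p \circ \zeta(\langle x,y\rangle) = [\mu_\Hq(x,y)] = \nu_4(\langle x,y\rangle)$.

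The central step is to show $\widetilde{\zeta}$ is a bijection. The key trick is to reparametrize: for an oriented orthonormal pair $(x,y)$ with $x \in S^3$, write $y = xz$ with $z \in S^3$; the orthogonality $(x|y)=0$ translates into $z + \bar{z} = 0$, so $z$ is a pure unit quaternion. A short quaternionic calculation then yields $\mu'_\Hq(x,y) = z$ and $\mu_\Hq(x,y) = xz\bar{x}$. Given $(u,v) \in S^2 \times S^2$, surjectivity follows from the surjectivity of the adjoint representation $S^3 \to SO(3)$: choose $x \in S^3$ with $xv\bar{x} = u$ and set $y = xv$. For injectivity, if $(x', y' = x'v)$ is another such basis mapping to $(u,v)$, the equality $x'v\bar{x'} = xv\bar{x}$ forces $\bar{x}x'$ to commute with $v$, hence $\bar{x}x' = a + bv$ with $a^2 + b^2 = 1$; this produces the $SO(2)$ change of basis $x' = ax + by$, $y' = -bx + ay$, confirming that both frames span the same oriented plane. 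Hence $\widetilde{\zeta}$ is a continuous bijection between compact Hausdorff spaces, therefore a homeomorphism, and the descended $\zeta$ is too.

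Finally, for the sphere bundle structure of $\nu_4 = p \circ \zeta$, I would unwind $p$ directly: over a small open disk $U \subset \RP^2$, the double cover $S^2 \to \RP^2$ splits as $\tilde{U}_+ \sqcup \tilde{U}_-$, and using the $\tilde{U}_+$ lift to choose a canonical representative in each equivalence class in $p^{-1}(U)$ yields a homeomorphism $p^{-1}(U) \cong \tilde{U}_+ \times S^2 \cong U \times S^2$. The main obstacle is the bijectivity of $\widetilde{\zeta}$: once the parametrization $y = xz$ with $z$ pure is in place, the algebra collapses to the surjectivity of $S^3 \to SO(3)$ and a commutant computation, but without this reparametrization the equations for $\widetilde{\zeta}^{-1}$ are unwieldy.
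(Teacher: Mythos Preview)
Your proof is correct and follows essentially the same route as the paper: the companion map $\mu'_\Hq$ is the paper's $\tilde{\mu}_\Hq$, and your bijectivity argument (reparametrize $y=xz$ with $z$ pure, then reduce to surjectivity of $S^3\to SO(3)$ and the commutant computation $\bar{x}x'\in\langle 1,v\rangle$) matches the paper's argument almost line for line. The only difference is that the paper outsources the sphere-bundle structure of $p$ to Davis's result on projective product spaces, whereas you sketch the local trivialization directly via the splitting of the double cover; your version is slightly more self-contained.
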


In order to prove the proposition, we would like to start with some considerations.

We have defined our map $\mu_{\Hq}: \mathbb{H} \times \mathbb{H} \longrightarrow \mathbb{H}$, $(x,y) \longmapsto \frac{1}{2}(y\overline{x}-x\overline{y})$, but we can also define its counterpart $\tilde{\mu}_{\Hq}:\Hq  \times \Hq\longrightarrow \Hq$, $(x,y)\longmapsto \frac{1}{2}(\overline{x}y-\overline{y}x)$ with perfectly analogous properties.

Remark that if $(x|y) =0$ then $\mu_{\Hq}(x,y)=y\overline{x}$ and $\tilde{\mu}_{\Hq}(x,y) = \overline{x}y$. Also, if $\Vert x\Vert = \Vert y \Vert =1$  then $\mu_{\Hq}(x,y), \tilde{\mu}_{\Hq}(x,y) \in S^2$, where $S^2$ is the space of pure quartenions of norm $1$. This allows to define a third map on the Stiefel manifold
\begin{equation*}
    V_2(\R^4)=\{(x,y)\in \R^4: (x|y) =0 , \Vert x \Vert = \Vert y \Vert =1  \}
\end{equation*} as follows 
\begin{equation*}
    \begin{array}{llcl}
    \xi: & V_2(\R^4) & \longrightarrow & S^2\times S^2\\
     & (x,y) & \longmapsto & (\mu_{\Hq}(x,y), \tilde{\mu}_{\Hq}(x,y))=(y\overline{x}, \overline{x}y)
    \end{array}.
\end{equation*}

 Let us see that $\xi$ is a surjective map. Considering $(u, v)\in S^2\times S^2$, we wish to find $(x,y)\in V_2(\R^4)$ such that $(y\overline{x}, \overline{x}y) = (u,v)$ or,  equivalently, $(xv\overline{x}, y) = (u,xv)$. Recall that 
 given  $u,v\in S^2$, there is a rotation of $\R^3$ which sends $v$ to $u$ and also that the map 
	\begin{align*}
	\vartheta: \, & S^3 \longrightarrow \mathrm{SO}(3) \\
	& x \longmapsto \vartheta_x: v \mapsto xv\overline{x}
	\end{align*}
	is well-known to be surjective. Thus, indeed, there exists $x\in S^3$ such that $xv\overline{x}=u$ and we can take $y=xv$. Clearly, $\Vert y \Vert =  \Vert x \Vert \Vert v \Vert  =1$ and, also, $(x|y) = \frac{1}{2}(\overline{x}xv+\overline{xv}x)=\frac{1}{2}(v+\overline{v})=0$. So, $(x,y) \in V_2(\R^2)$ and $\xi(x,y) = (u,v)$.
    
    However, $\xi$ is not injective. Let $(x,y),(q,z)\in V_2(\R^4)$ such that $\xi(x,y) =(u,v) = \xi(q,z)$. This implies
    \begin{equation*}
        \begin{cases}
            y\overline{x} = u = z \overline{q} \\
            \overline{x}y = v = \overline{q}z 
        \end{cases} \Rightarrow \begin{cases}
            xv\overline{x} = u = qv\overline{q}\\
            y= xv \\
            z = qv 
        \end{cases}.
    \end{equation*}
From $qv\overline{q}=xv\overline{x}$, then $\overline{x}qv=v\overline{x}q$, and so $\overline{x}q$ commutes with $v$. Thus, $\overline{x}q= \alpha+\beta v$ with $\alpha,\beta\in\R$ such that $\alpha^2+\beta^2 =1$ since also $\Vert \overline{q}x\Vert = 1$. We conclude that $q=x(\alpha+\beta v)=\alpha x +\beta y$ and, recalling that $v^2=-v\overline{v}=-1$, that $z=qv =x(\alpha+\beta v)v = -\beta x + \alpha y$. To summarize
    \begin{equation}
        \begin{pmatrix}
            q \\ z 
        \end{pmatrix} = \begin{pmatrix}
            \alpha & \beta \\
            -\beta & \alpha \end{pmatrix} \begin{pmatrix}
            x \\ y 
        \end{pmatrix}, \quad \alpha^2+\beta^2=1, \label{eq:grass_+}
    \end{equation}
that is, $(q,z)$ and $(x,y)$ differ by a transformation of $\mathrm{SO}(2)$.

\begin{proof}[Proof of Proposition \ref{prop:homeom-hq}]
From the considerations above, we have a well defined map $$\tilde{\xi}:V_2(\R^4)/\mathrm{SO}(2) \longrightarrow S^2\times S^2$$ which is a continuous bijection. It is, moreover, a homeomorphism since $V_2(\R^4)/\mathrm{SO}(2)$ is compact, and $S^2\times S^2$ is Hausdorff. Actually, the quotient $V_2(\R^4)/\mathrm{SO}(2)$ or, more precisely, the quotient $V_2(\R^4)/\sim$ where $\sim$ is the equivalence relation given by Eq. \eqref{eq:grass_+}, is the Grassmannian of oriented two planes of $\R^4$, $\Gr_2^+(\R^4)$, and the homeomorphism $\tilde{\xi}$ can be seen as an explicit expression of the well-known homeomorphism 
$\Gr_2^+(\R^4) \cong S^2 \times S^2$, see for instance \cite[p. 104]{GHV}.

To obtain our Grassmannian $\Gr_2(\R^4)$ we can take the quotient of $\Gr_2^+(\R^4)$ under the equivalence relation $(x,y)\sim (y,x)$. Recall that $P=(S^2\times S^2)/\sim$ with $(u,v)\sim (-u,-v)$. Now, since $\tilde{\xi}(y,x) = (x\overline{y}, \overline{y}x)= (\overline{y\overline{x}}, \overline{\overline{x}y})=-(y\overline{x}, \overline{x}y)=-\tilde{\xi}(x,y)$, then we have a well-defined map 
\begin{equation*}
    \zeta \colon \Gr_2(\R^4) \longrightarrow P
\end{equation*}
which is a homeomorphism. It is clear that the diagram \eqref{diag:v4} commutes. Since the map $p$ is known to be a sphere bundle with fibre $S^2$ (see \cite{Davis}), then so is our map $\nu_{4}$.
\end{proof}

	\section{The map $\nu_8:\Gr_2(\R^8) \longrightarrow \RP^6$}
	
	After the construction of $\nu_{4}$, we naturally proceed in a similar way to construct a map on $\Gr_2(\R^8)$, now using the octonion set $\Oc$ that we identify with $\R^8$ given with its canonical basis denoted by $e_0,...,e_7$. Firstly, we will need to define a multiplication in $\Oc$. There are many possible choices but all of them will lead to analogous results. We will work with the multiplication defined through the following table 
	together with the rule $e_ie_j=-e_je_i$ for $i>j\geq 0$. 
    
	\begin{center}
		\begin{tabular}{|cl|llllllll|}
			\hline
			\multicolumn{2}{|c|}{\multirow{2}{*}{$e_{i} \, e_{j}$}} & \multicolumn{8}{c|}{$e_{j}$}                                                                          \\ \cline{3-10} 
			\multicolumn{2}{|c|}{} & \multicolumn{1}{c|}{$e_{0}$}  & \multicolumn{1}{c|}{$e_{1}$}  & \multicolumn{1}{c|}{$e_{2}$}  & \multicolumn{1}{c|}{$e_{3}$}  & \multicolumn{1}{c|}{$e_{4}$}  & \multicolumn{1}{c|}{$e_{5}$}  & \multicolumn{1}{c|}{$e_{6}$}  & \multicolumn{1}{c|}{$e_{7}$ } \\ \hline
			\multicolumn{1}{|c|}{\multirow{8}{*}{$e_{i}$}}  & $e_{0}$ & \multicolumn{1}{c|}{$e_{0}$} & \multicolumn{1}{c|}{$e_{1}$}  & \multicolumn{1}{c|}{$e_{2}$}  & \multicolumn{1}{c|}{$e_{3}$}  & \multicolumn{1}{c|}{$e_{4}$}  & \multicolumn{1}{c|}{$e_{5}$}  & \multicolumn{1}{c|}{$e_{6}$}  & \multicolumn{1}{c|}{$e_{7}$}  \\ \cline{2-10} 
			\multicolumn{1}{|c|}{}                        & $e_{1}$ & \multicolumn{1}{c|}{$\cdot$}  & \multicolumn{1}{c|}{$-e_{0}$} & \multicolumn{1}{c|}{$e_{3}$}  & \multicolumn{1}{c|}{$-e_{2}$} & \multicolumn{1}{c|}{$e_{5}$}  & \multicolumn{1}{c|}{$-e_{4}$} & \multicolumn{1}{c|}{$-e_{7}$} & \multicolumn{1}{c|}{$e_{6}$}  \\ \cline{2-10} 
			\multicolumn{1}{|c|}{}                        & $e_{2}$ & \multicolumn{1}{c|}{$\cdot$}  & \multicolumn{1}{c|}{$\cdot$}  & \multicolumn{1}{c|}{$-e_{0}$} & \multicolumn{1}{c|}{$e_{1}$}  & \multicolumn{1}{c|}{$e_{6}$}  & \multicolumn{1}{c|}{$e_{7}$}  & \multicolumn{1}{c|}{$-e_{4}$} & $-e_{5}$ \\ \cline{2-10} 
			\multicolumn{1}{|c|}{}                        & $e_{3}$ & \multicolumn{1}{c|}{$\cdot$}  & \multicolumn{1}{c|}{$\cdot$}  & \multicolumn{1}{c|}{$\cdot$}  & \multicolumn{1}{c|}{$-e_{0}$} & \multicolumn{1}{c|}{$e_{7}$}  & \multicolumn{1}{c|}{$-e_{6}$} & \multicolumn{1}{c|}{$e_{5}$}  & $-e_{4}$ \\ \cline{2-10} 
			\multicolumn{1}{|c|}{}                        & $e_{4}$ & \multicolumn{1}{c|}{$\cdot$}  & \multicolumn{1}{c|}{$\cdot$}  & \multicolumn{1}{c|}{$\cdot$}  & \multicolumn{1}{c|}{$\cdot$}  & \multicolumn{1}{c|}{$-e_{0}$} & \multicolumn{1}{c|}{$e_{1}$}  & \multicolumn{1}{c|}{$e_{2}$}  & \multicolumn{1}{c|}{$e_{3}$}  \\ \cline{2-10} 
			\multicolumn{1}{|c|}{}                        & $e_{5}$ & \multicolumn{1}{c|}{$\cdot$}  & \multicolumn{1}{c|}{$\cdot$}  & \multicolumn{1}{c|}{$\cdot$}  & \multicolumn{1}{c|}{$\cdot$}  & \multicolumn{1}{c|}{$\cdot$}  & \multicolumn{1}{c|}{$-e_{0}$} & \multicolumn{1}{c|}{$-e_{3}$} & \multicolumn{1}{c|}{$e_{2}$}  \\ \cline{2-10} 
			\multicolumn{1}{|c|}{}                        & $e_{6}$ & \multicolumn{1}{c|}{$\cdot$}  & \multicolumn{1}{c|}{$\cdot$}  & \multicolumn{1}{c|}{$\cdot$}  & \multicolumn{1}{c|}{$\cdot$}  & \multicolumn{1}{c|}{$\cdot$}  & \multicolumn{1}{c|}{$\cdot$}  & \multicolumn{1}{c|}{$-e_{0}$} & $-e_{1}$ \\ \cline{2-10} 
			\multicolumn{1}{|c|}{}                        & $e_{7}$ & \multicolumn{1}{c|}{$\cdot$}  & \multicolumn{1}{c|}{$\cdot$}  & \multicolumn{1}{c|}{$\cdot$}  & \multicolumn{1}{c|}{$\cdot$}  & \multicolumn{1}{c|}{$\cdot$}  & \multicolumn{1}{c|}{$\cdot$}  & \multicolumn{1}{c|}{$\cdot$}  & $-e_{0}$ \\ \hline
		\end{tabular}
	\end{center}

	\medskip The element $e_0$ is then the unit of $\Oc$.
	As usual, the conjugate of $x=\sum x_ie_i$ is $\Bar{x}= x_0e_0- \sum\limits_{i>0} x_ie_i$ and $x$ is a pure octonion if $\Bar{x}=-x$. 

     Analogously to the quartenions, $\Oc$ is also equipped with a composition algebra structure over $\R$ (in fact, there exist only four composition algebras of $\R$ which  are $\R, \C, \Hq$ and $\Oc$). The norm of an octonion $x$ is defined as $||x||^2= x\overline{x}$ and the scalar product of two octonions $x,y$ is given by 
    $(x|y) = \frac{1}{2}(x\overline{y}+y\overline{x})=\frac{1}{2}(\overline{x}y+\overline{y}x).$ Also, identifying $\R^7$ with the set of pure octonions, we can define a vector product in $\R^7$ by setting $u\wedge v = \frac{1}{2}(uv-vu)$, for $u,v\in \R^7$. Finally, we recall that, while $\Oc$ is not associative, any subalgebra $\R(x,y)$ of $\Oc$ generated by two octonions $x,y$ is associative.

	Let $\mu_{\Oc}$ be defined as:
	\begin{align*}
		\mu_{\Oc} \colon \Oc \times \Oc &\to \Oc \\
		(x,y) &\mapsto \frac{1}{2}(y \Bar{x}-x \Bar{y})
	\end{align*}
	
	The image of this map is contained in the 7 dimensional subspace of $\Oc$ of pure octonions that can be identified with $\R^7$. Analogously to $\mu_{\Hq}$, we can write $\mu_{\Oc}(x_0+v,y_0+w)=x_{0}w-y_{0}v+v \wedge w$ if we see $\Oc$ as $\R \oplus \R^7$. Using Pl\"ucker coordinates and writing $x=\sum x_ie_i$, $y=\sum y_i e_i$, we can also describe $\mu_{\Oc}$ as a linear combination of $p_{ij}=(x_iy_j-x_jy_i)$ as follows: $\mu_{\Oc}(x,y)=(p_{01}+p_{23}+p_{45}-p_{67},p_{02}-p_{13}+p_{46}+p_{57},p_{03}+p_{12}+p_{47}-p_{56},p_{04}-p_{15}-p_{26}-p_{37},p_{05}+p_{14}-p_{27}+p_{36},p_{06}+p_{17}+p_{24}-p_{35},p_{07}-p_{16}+p_{25}+p_{34})$.\\
	
	As in Proposition \ref{nu_well _def}, we have: 
	
	\begin{proposition} Let $x,y \in \Oc$, then:
		
		\begin{enumerate}[label=\roman*)]
			\item $\mu_{\Oc}(\alpha x+ \beta y, \gamma x+ \delta y)=(\alpha \delta-\beta \gamma) \,\mu_{\Oc}(x,y)$, where $\alpha,\beta,\gamma,\delta \in \R$ and $\alpha \delta-\beta \gamma \neq 0$;
			
			\medskip
			
			\item If $\mu_{\Oc}(x,y)=0$ then $x$ and $y$ are linearly dependent.
		\end{enumerate}
		
		\label{nu8_well _def}
	\end{proposition}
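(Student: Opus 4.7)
The plan is to follow, step for step, the template of Proposition~\ref{nu_well _def}, the only genuinely new issue being the non-associativity of $\Oc$, which must be sidestepped using the fact recalled just above the statement that the subalgebra $\R(x,y)\subset\Oc$ generated by any two octonions is associative (Artin's theorem).

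For part (i), I would simply expand by distributivity:
\[\mu_{\Oc}(\alpha x+\beta y,\gamma x+\delta y) = \tfrac{1}{2}\bigl[(\gamma x+\delta y)(\alpha\bar{x}+\beta\bar{y}) - (\alpha x+\beta y)(\gamma\bar{x}+\delta\bar{y})\bigr].\]
The terms $\alpha\gamma\,x\bar{x}$ and $\beta\delta\,y\bar{y}$ cancel between the two halves, and the remaining cross terms collect into $(\alpha\delta-\beta\gamma)\,\tfrac{1}{2}(y\bar{x}-x\bar{y}) = (\alpha\delta-\beta\gamma)\,\mu_{\Oc}(x,y)$. Since only pairwise products appear, this step uses distributivity alone and is literally the calculation carried out in the quaternion case.

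For part (ii), I would imitate the quaternion argument and establish the Lagrange-type identity $\|\mu_{\Oc}(x,y)\|^2 = \|x\|^2\|y\|^2 - (x|y)^2$. Because $\mu_{\Oc}(x,y)$ is a pure octonion, $\|\mu_{\Oc}(x,y)\|^2$ equals $-\tfrac{1}{4}(y\bar{x}-x\bar{y})^2$; expanding this square introduces triple products such as $(y\bar{x})(x\bar{y})$ and $(x\bar{y})(y\bar{x})$. Here the associativity of $\R(x,y)$ (applied to the associative subalgebra that contains $x,y,\bar{x},\bar{y}$) gives $(y\bar{x})(x\bar{y}) = y(\bar{x}x)\bar{y} = \|x\|^2\|y\|^2$ and symmetrically $(x\bar{y})(y\bar{x}) = \|x\|^2\|y\|^2$. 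Expanding $(x|y)^2 = \tfrac{1}{4}(x\bar{y}+y\bar{x})^2$ inside the same associative subalgebra and subtracting yields exactly the claimed identity, from which $\mu_{\Oc}(x,y)=0$ forces $\|x\|^2\|y\|^2\sin^2\measuredangle(x,y) = 0$ and hence the linear dependence of $x$ and $y$.

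The main obstacle I expect is not conceptual but one of careful bookkeeping: each three-fold product appearing in the expansion must be checked to live inside the associative subalgebra $\R(x,y)$ so that Artin's theorem may be invoked. Since every product encountered involves only $x,y,\bar{x},\bar{y}$ and $\bar{x},\bar{y}\in\R(x,y)$, this check is immediate, and the rest of the argument transports verbatim from Proposition~\ref{nu_well _def}.
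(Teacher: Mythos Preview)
Your proposal is correct and is exactly the approach the paper indicates: its proof reads in full ``The proof is perfectly analogous to that of Proposition~\ref{nu_well _def} taking into account that $\R(x,y)$ is associative,'' and you have carried out precisely that analogy, correctly identifying that part~(i) needs only distributivity while part~(ii) requires Artin's theorem to justify the reassociations in the norm computation.
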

	
	\begin{proof}
	
	 The proof is perfectly analogous to that of Proposition \ref{nu_well _def} taking into account that $\R(x,y)$ is associative. 
			\end{proof}
	
	As before this gives rise to the following map $\Gr_2(\R^8) \longrightarrow \RP^6$:
	\begin{align*}
		\nu_{8} \colon \Gr_{2}(\R^8) &\to \RP^6\\
		\langle x,y\rangle &\mapsto \left[ \frac{1}{2}(y \Bar{x}-x \Bar{y})\right]
	\end{align*}
	
		\begin{proposition}\label{nu8-pi1}
		The map $\nu_{8}$ induces a $\pi_1$-isomorphism.
	\end{proposition}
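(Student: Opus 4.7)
The plan is to mimic the proof of Proposition \ref{nu4-pi1} by exhibiting an explicit section of $\nu_8$. Once we have a section $\sigma:\RP^6\to \Gr_2(\R^8)$ with $\nu_8\circ\sigma={\rm Id}$, the induced map $(\nu_8)_*:\pi_1(\Gr_2(\R^8))\to\pi_1(\RP^6)$ is a surjective group homomorphism between two copies of $\Z_2$, hence an isomorphism.

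The candidate section is the analogue of the one used for $\nu_4$: define $\sigma:\RP^6\to\Gr_2(\R^8)$ by sending $[x_1:x_2:\ldots:x_7]$ to the plane of $\R^8\cong\Oc$ spanned by $e_0$ and $v=x_1e_1+x_2e_2+\cdots+x_7e_7$. This is well defined since $e_0$ and $v\neq 0$ are linearly independent (one being a unit octonion and the other a nonzero pure octonion) and since rescaling $v$ by a nonzero $\lambda\in\R$ does not change the plane. The continuity of $\sigma$ is immediate from the continuity of the usual chart-wise expressions on $\RP^6$.

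To verify $\nu_8\circ\sigma={\rm Id}$, I would compute $\mu_\Oc(e_0,v)$ directly from the definition. Since $\overline{e_0}=e_0$ and $\overline{v}=-v$ (as $v$ is a pure octonion), we obtain
\[
\mu_\Oc(e_0,v)=\tfrac{1}{2}(v\,\overline{e_0}-e_0\,\overline{v})=\tfrac{1}{2}(v+v)=v,
\]
so $\nu_8(\sigma([x_1:\ldots:x_7]))=[v]=[x_1:x_2:\ldots:x_7]$, as required.

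There is essentially no obstacle here: the only thing to note is that we do not need associativity (we only multiply by $e_0$), so the octonion subtlety does not intervene. Conclude by recalling $\pi_1(\Gr_2(\R^8))=\pi_1(\RP^6)=\Z_2$ (for $n\geq 3$ and $k\geq 2$) and invoking the fact that a surjective endomorphism of a finite group is an automorphism, so $(\nu_8)_*$ is the desired $\pi_1$-isomorphism.
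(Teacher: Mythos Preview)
Your proof is correct and is precisely the approach the paper intends: it explicitly carries out the ``similar'' argument from Proposition~\ref{nu4-pi1}, exhibiting the section $\sigma([x_1:\ldots:x_7])=\langle e_0,\,x_1e_1+\cdots+x_7e_7\rangle$ and checking $\nu_8\circ\sigma={\rm Id}$. Nothing is missing.
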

	
	\begin{proof} The proof is similar to the proof of Proposition \ref{nu4-pi1}.
	\end{proof}

As we will see in the next section, $\nu_8$ is a submersion and a locally trivial fibration. Here we construct an explicit homeomorphism which permits us to identify the fibre of $\nu_8$. Think of $S^7$ and $S^6$ as the spaces of unitary octonions and unitary pure octonions, respectively. For $u\in S^6$, we denote by $S^1_u$ the circle of $S^7$ lying in the plane $\langle 1,u\rangle$, that is,
$$S^1_u=\{w_u=\alpha+\beta u ~|~ \alpha,\beta\in \R, \alpha^2+\beta^2=1\}$$
Considere the space $$Q = (S^7 \times S^6)/\sim$$ with the equivalence relation $\sim$  given by $(x,u)\sim (\omega_u\, x, \pm u)$. 

	\begin{proposition}\label{prop:v8-fiber}
		The map $Q\longrightarrow \Gr_2(\R^8)$ defined by $(x,u) \mapsto \langle x,ux\rangle$ is a homeomorphism. Furthermore, this map makes the following diagram commute:
		\[\xymatrix{ 
			Q \ar[rr]^-{\xi} \ar[rd]_-{pr} & & \Gr_2(\R^8) \ar[ld]^-{\nu_8}\\
			& \RP^6 & 
		}\]
		where $pr([x,u])=[u]$ and, for any $[u]\in \RP^6$, the fibre $\nu_8^{-1}([u])$ is homeomorphic to $\C\mathrm{P}^3$.
	\end{proposition}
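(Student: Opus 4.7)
The plan is to verify, in order, that $\xi$ is well-defined on $Q$, that the diagram commutes, that $\xi$ is a continuous bijection (hence a homeomorphism, since $Q$ is compact and $\Gr_2(\R^8)$ is Hausdorff), and finally to identify the fibre $pr^{-1}([u])$ with $\C\mathrm{P}^3$. The recurring tool is Artin's theorem: any subalgebra of $\Oc$ generated by two elements is associative, so every octonionic manipulation below is to be carried out inside a suitable $\R(a,b)$.

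First, for $(x,u)\in S^7\times S^6$ one has $u^2=-u\bar u=-1$; computing inside $\R(x,u)$,
\[
(x|ux)=\tfrac12\bigl(x\bar x\bar u+(ux)\bar x\bigr)=\tfrac12(\bar u+u)=0,\qquad \|ux\|^2=u\bar u=1,
\]
so $(x,ux)$ is orthonormal and spans a $2$-plane. For $\omega_u=\alpha+\beta u$ with $\alpha^2+\beta^2=1$, associativity in $\R(x,u)$ gives $u(\omega_u x)=-\beta x+\alpha(ux)$, so the transitions from $(x,ux)$ to $(\omega_u x,u(\omega_u x))$ and to $(\omega_u x,-u(\omega_u x))$ have determinants $+1$ and $-1$ respectively; hence $\xi$ descends to $Q$. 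Commutativity of the diagram is then immediate from
\[
\mu_{\Oc}(x,ux)=\tfrac12\bigl((ux)\bar x-x\bar x\bar u\bigr)=\tfrac12(u-\bar u)=u,
\]
so $\nu_8(\xi([x,u]))=[u]=pr([x,u])$.

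For bijectivity, $\xi$ is manifestly continuous. Surjectivity: given a plane $P$ with orthonormal basis $(e_1,e_2)$, set $u=\mu_{\Oc}(e_1,e_2)$. The identity $\|\mu_\Oc(x,y)\|^2=\|x\|^2\|y\|^2-(x|y)^2$ (already used in the proof of Proposition \ref{nu8_well _def}) gives $u\in S^6$; orthogonality of $e_1,e_2$ simplifies $u$ to $e_2\bar{e_1}$, and associativity of $\R(e_1,e_2)$ gives $ue_1=e_2(\bar{e_1}e_1)=e_2$, so $\xi([e_1,u])=P$. Injectivity: if $\xi([x,u])=\xi([x',u'])$, applying $\nu_8$ forces $u'=\pm u$; writing $x'\in\langle x,ux\rangle$ as $\alpha x+\beta ux=(\alpha+\beta u)x=\omega_u x$ with $\alpha^2+\beta^2=1$, one sees $(x',u')=(\omega_u x,\pm u)$, which is precisely the equivalence defining $Q$.

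For the fibre, the generator $(x,u)\sim(x,-u)$ (take $\omega_u=1$ with the minus sign) shows $pr^{-1}([u])=\{[x,u]:x\in S^7\}$, with the residual relation $x\sim\omega_u x$ for $\omega_u\in S^1_u$. Since $u^2=-1$, the subalgebra $\R\langle 1,u\rangle\subset\Oc$ is isomorphic to $\C$, and left multiplication turns $\Oc$ into a left module over this $\C$; the module law $(\omega_1\omega_2)x=\omega_1(\omega_2 x)$ holds because $\omega_1,\omega_2,x$ all lie in the associative algebra $\R(u,x)$. Thus $\Oc$ is a complex $4$-dimensional vector space on which $S^1_u$ acts by scalar multiplication, and the orbit space of this action on $S^7$ is $\C\mathrm{P}^3$. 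Hence $\nu_8^{-1}([u])\cong pr^{-1}([u])\cong\C\mathrm{P}^3$. The main source of difficulty in the whole argument is this associativity bookkeeping: every identity among octonionic products must be arranged so that all factors lie within a common two-generator subalgebra; once that discipline is respected, the proof reduces to elementary quotient topology.
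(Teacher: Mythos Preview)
Your proof is correct and follows essentially the same route as the paper: verify well-definedness via Artin's theorem, establish surjectivity by sending an orthonormal pair $(e_1,e_2)$ to $(e_1,e_2\bar e_1)$, prove injectivity, invoke compact/Hausdorff, and identify the fibre as $S^7/S^1_u\cong\C\mathrm{P}^3$. The one noteworthy difference is that for injectivity the paper computes $v=(vy)\bar y$ directly to force $v=\pm u$, whereas you apply the already-proved commutativity $\nu_8\circ\xi=pr$ to get $[u']=[u]$ immediately; your shortcut is cleaner, and your explicit justification that $\Oc$ is a left $\C$-module (via associativity in $\R(u,x)$) makes the fibre identification more self-contained than the paper's one-line claim.
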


 \begin{proof} 
 Firstly, we will show that the map $\xi$ is well-defined. Let $[x,u]\in Q$. The pair $(x, ux)$ spans a plane in $\Gr_2(\R^8)$, since $(x|ux)=0$. That $x$ and $ux$ are orthogonal is readily seen from the associativity of $\R(u,x)$ and from the fact that $\overline{u}=-u$. Now, let $(\omega_u\, x, \pm u) \in [x,u]$. We aim to see that $\xi([\omega_u\, x, \pm u])=\xi([u,x])$. As $\omega_u \in S^1_u$, there exist $\alpha, \beta \in \R$ such that $\omega_u=\alpha+\beta u$ and $\alpha^2+\beta^2=1$. We have  $\xi([\omega_u\, x, \pm u])=\langle\omega_u \, x, \pm u \, (\omega_u \, x)\rangle = \langle(\alpha+\beta u)\, x, \pm u\, ((\alpha+\beta u)\, x)\rangle =\,  \langle\alpha x + \beta \, u x, \pm \alpha\, u x \pm \beta u^2 x\rangle$. But $u$ is a unitary and pure octonion, so $u^2=-1$. Then $\xi([\omega_u\, x, \pm u]) = \langle\alpha x + \beta \, u x, \pm \alpha\, u x \mp \beta\, x\rangle=\langle x,ux\rangle=\xi([x,u])$. 
 
We will now show that $\xi$ is surjective. Let $\langle x,y\rangle\in \Gr_2(\R^8)$. We can choose $x$ and $y$ such that $(x|y)=0$ and $\Vert x \Vert = \Vert y \Vert = 1$.
Consider $(x,y\overline{x})$ and let us verify that $(x,y\overline{x})\in S^7\times S^6$. That $x\in S^7$ is immediate. For $y\overline{x}$, we have that $\Vert y\overline{x} \Vert = \Vert y\Vert \Vert \overline{x} \Vert =1$. Also, $y\overline{x} + \overline{y\overline{x}} = y\overline{x}+x\overline{y} =0$, since $(x|y)=0$ and, thus, the real part of $y\overline{x}$ is zero. Then, $y\overline{x}\in S^6$.
Observe that $\xi([x,y\overline{x}])= \, \langle x, (y\overline{x})x \rangle = \langle x, y\rangle$ since $\R(x,y)$ is associative.

We now proceed to showing that $\xi$ is injective.  Let $[x,u],[y,v] \in Q$. Suppose that $\xi([x,u])=\xi([y,v])$, i.e., $\langle x,ux\rangle=\langle y,uy\rangle$ and let us see that $[x,u]=[y,v]$, i.e., $(y,v)=(\omega_u \, x,\pm u)$, for some $\omega_u \in S^1_u$. 	Since $y, vy \in \langle x,ux\rangle$, there exist $\alpha, \beta, \gamma, \delta \in \R$, such that $y=\alpha x+\beta u x$ and $vy=\gamma x+\delta u x$. Thus, $y=(\alpha+\beta \, u) x$ and, since $\Vert y \Vert =1$, then $\alpha^2+\beta^2=1$. Consequently, $y=\omega_u\, x$, $\omega_u \in S^1_u$. Now, $vy=(\gamma+\delta\, u)\, x$, so
\begin{equation*}
   v = (vy)\overline{y}=((\gamma+\delta u) x)\overline{((\alpha+\beta u) x)} =((\gamma+\delta u)x)(\overline{x}(\alpha+\beta \overline{u}))  
				= \alpha\gamma + \beta\delta + (\alpha\delta -\beta\gamma) u. 
\end{equation*}
		Since $v$ is a pure octonion, $\alpha\gamma + \beta\delta=0$ and $v=(\alpha\delta -\beta\gamma)\, u$. As $||v||=1$, $\alpha\delta -\beta\gamma=\pm 1$ and $v=\pm u$.
			Therefore, $(y,v)=(\omega_u \, x,\pm u)$.

    Clearly, the map $\xi$ is continuous. Since $Q$ is a compact space and $\Gr_2(\R^4)$ is Hausdorff then $\xi$ is a homeomorphism.

    Let us now quickly verify that the diagram commutes. Take $[x,u] \in Q$, we have $\nu_8 \circ \xi([x,u])=\nu_8(\langle x,ux\rangle)=\left[\frac{1}{2}((ux)\overline{x}-x(\overline{ux}))\right] =\left[\frac{1}{2}(u+u)\right]=[u]=pr([x,u])$.

   It remains to see that, for every $[u]\in \RP^6$, the fibre $\nu_8^{-1}([u])$ is homeomorphic to $\C\mathrm{P}^3$.   From what we proved above, we have that $\nu_8^{-1}([u])$ is homeomorphic to $pr^{-1}([u])$. Now, $pr^{-1}([u]) =  S^7/\sim$ with $\sim$ the equivalence relation given by $x \sim \omega_u x$. We can then write  $\nu_8^{-1}([u]) = S^7/S^1_u$ which is, thus, homeomorphic to $\C\mathrm{P}^3$. 
 \end{proof}
	
\section{Submersions via equivariance}

In this section, we shall make use of the differentiable structure of Grassmanians and, moreover, of the fact that they are homogeneous manifolds. 

The goal is to prove that our maps $\nu_4: \mathrm{Gr}_2(\R^4) \longrightarrow \R\mathrm{P}^2$ and $\nu_8: \mathrm{Gr}_2(\R^8) \longrightarrow \R\mathrm{P}^6$  are submersions. Since they are both clearly smooth and surjective, it suffices to prove that they have constant rank (from surjectivy we get that the rank is maximal). For that we will use the equivariant rank theorem, \cite[Thm. 9.7]{Lee}.

 \begin{theorem}[Equivariant rank theorem]
      Let $M$ and $N$ be smooth manifolds and let $G$ be a Lie group. Suppose $f: M \longrightarrow N$ is a smooth map that is equivariant with respect to a transitive smooth $G$-action on M and any smooth $G$-action on N. Then $f$ has constant rank. 
 \end{theorem}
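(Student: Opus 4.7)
The plan is to show that for any two points $p, q \in M$, the differentials $df_p$ and $df_q$ have the same rank, which will establish that the rank of $f$ is constant on $M$.

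First, I would exploit the transitivity of the $G$-action on $M$: pick $g \in G$ with $g \cdot p = q$. Write $\theta^M_g : M \to M$, $x\mapsto g\cdot x$, and $\theta^N_g : N \to N$, $y\mapsto g\cdot y$, for the two action maps. A preliminary observation is that both $\theta^M_g$ and $\theta^N_g$ are diffeomorphisms: smoothness of the respective actions makes them smooth, and $\theta^M_{g^{-1}}$, $\theta^N_{g^{-1}}$ provide smooth two-sided inverses. Crucially, this step uses only that the actions are smooth — not that the action on $N$ is transitive.

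Next, I would recast the equivariance hypothesis $f(g\cdot x) = g\cdot f(x)$ as the identity of smooth maps
\[
f \circ \theta^M_g \;=\; \theta^N_g \circ f,
\]
and differentiate at the point $p$. The chain rule gives
\[
df_q \circ (d\theta^M_g)_p \;=\; (d\theta^N_g)_{f(p)} \circ df_p.
\]
Since $\theta^M_g$ and $\theta^N_g$ are diffeomorphisms, the linear maps $(d\theta^M_g)_p : T_pM \to T_qM$ and $(d\theta^N_g)_{f(p)} : T_{f(p)}N \to T_{f(q)}N$ are isomorphisms. As rank is preserved under pre- and post-composition with linear isomorphisms, we conclude $\mathrm{rank}(df_p) = \mathrm{rank}(df_q)$, and since $p, q$ were arbitrary, $f$ has constant rank.

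There is no genuine obstacle here: the argument is essentially formal. The only subtlety worth keeping in mind is the asymmetric role played by the two actions — transitivity is needed only on the source, so as to connect arbitrary points by a single group element, while on the target one needs nothing beyond smoothness of the action in order to obtain the diffeomorphism $\theta^N_g$.
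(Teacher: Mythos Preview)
Your argument is correct and is exactly the standard proof of this result. Note, however, that the paper does not supply its own proof of the Equivariant Rank Theorem: it quotes the statement from \cite[Thm.~9.7]{Lee} and uses it as a black box, so there is nothing to compare against beyond observing that your proof is essentially the one found in Lee.
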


\subsection{The map $\nu_4:\mathrm{Gr}_2(\R^4) \longrightarrow \R\mathrm{P}^2$}

Consider $\mathrm{Sp(1)} = \{h \in  \mathbb{H}: \, \Vert h \Vert =1 \}$ equipped with quarternionic multiplication. We take two copies of $\mathrm{Sp}(1)$, say $\mathrm{Sp}(1)_L$ and $\mathrm{Sp}(1)_R$,  inside $\mathrm{GL}(4, \R)$  by means of left and right multiplication. More precisely:
\begin{equation}\label{eq:SP1L}
    \mathrm{Sp}(1)_L = \{\varphi \in \mathrm{GL}(4,\mathbb{R}): \varphi(x) = gx, \text{ for some } g \in \mathbb{H} \text{ with } \Vert g \Vert = 1   \}
\end{equation}
\begin{equation}\label{eq:SP1R}
    \mathrm{Sp}(1)_R = \{\psi \in \mathrm{GL}(4,\mathbb{R}): \psi(x) = x\overline{h}, \text{ for some } h \in \mathbb{H} \text{ with } \Vert h \Vert = 1   \}
\end{equation}

Remark that the linear maps  in \eqref{eq:SP1L} and \eqref{eq:SP1R} are orthogonal with respect to the quaternionic inner product on $\mathbb{R}^4$. The Lie group 
$$\begin{array}{lcl}
G   =  \mathrm{Sp}(1)\mathrm{Sp}(1) & = & \{\varphi\psi: \,\, \varphi \in \mathrm{Sp}(1)_L, \psi \in \mathrm{Sp}(1)_R\} \\  & = &  \{\alpha \in \mathrm{GL}(4,\mathbb{R}): \alpha(x) = gx\overline{h}, \text{ for some } g,h \in \mathbb{H} \text{ with } \Vert g \Vert = \Vert h \Vert = 1   \} 
\end{array}$$

 is easily seen to be isomorphic to $(\mathrm{Sp}(1) \times \mathrm{Sp}(1))/\mathbb{Z}_2$, with $\mathbb{Z}_2 = \{(1,1), (-1,-1)\}$, and is, moreover, known to be isomorphic to $\mathrm{SO}(4)$. Consider also 
 \begin{equation*}
     H =  \{\beta \in \mathrm{GL}(4,\mathbb{R}): \beta(x) = gx\overline{g} \text{ for some } g \in \mathbb{H} \text{ with } \Vert g \Vert = 1   \}.
 \end{equation*}
The Lie group $H$ is  isomorphic to $\mathrm{Sp}(1)/\mathbb{Z}_2$, with $\mathbb{Z}_2 = \{1,-1\}$, and, what is more, to $\mathrm{SO}(3)$. 
 Notice that, under the isomorphisms above, the map $G \longrightarrow H$ given by $[g,h] \longmapsto [g]$  is well-defined and is a Lie group homomorphism.
 
 The standard action of $\mathrm{SO}(4)$ on the Stiefel manifold $V_2(\R^4)$ is transitive. This is a well-known fact but can be readily seen by observing that $\mathrm{SO}(4)$ acts transitively on $S^3$ with stabilizer $\mathrm{SO}(3)$ and that $\mathrm{SO}(3)$ acts transitively on $S^2$.  This action, being linear, descends to a transitive action on $\mathrm{Gr}_2(\R^4)$. Furthermore, $\mathrm{SO}(4)$ acts on $S^2$ and, thus, on $\R\mathrm{P}^2$, by means of the standard action of $\mathrm{SO}(3)$ and the homomorphism above.

\begin{proposition}
    The map $\nu_4:\mathrm{Gr}_2(\R^4) \longrightarrow \R\mathrm{P}^2$ is $\mathrm{Sp}(1)\mathrm{Sp}(1)$-equivariant. 
\end{proposition}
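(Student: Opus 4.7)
The plan is to unwind what equivariance means on both sides and to verify the identity by a direct quaternionic computation, using the key fact that for $h\in\mathrm{Sp}(1)$ we have $\overline{h}h=h\overline{h}=1$ and that $\Hq$ is associative. Concretely, an element $\alpha\in G$ acts on a plane $\langle x,y\rangle\in\Gr_2(\R^4)$ by $\alpha\cdot\langle x,y\rangle=\langle gx\overline{h},gy\overline{h}\rangle$, where $\alpha(z)=gz\overline{h}$, and on $[u]\in\RP^2$ (via the homomorphism $[g,h]\mapsto[g]$ and the $\mathrm{SO}(3)$-action) by $\alpha\cdot[u]=[gu\overline{g}]$. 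Thus what must be checked is
\begin{equation*}
\nu_4\bigl(\langle gx\overline{h},gy\overline{h}\rangle\bigr)=\bigl[\,g\,\mu_{\Hq}(x,y)\,\overline{g}\,\bigr].
\end{equation*}

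The main (and essentially only) computation is then to evaluate $\mu_{\Hq}(gx\overline{h},gy\overline{h})$. Using $\overline{gx\overline{h}}=h\,\overline{x}\,\overline{g}$ and associativity, one gets
\begin{equation*}
(gy\overline{h})\overline{(gx\overline{h})}=gy\overline{h}\,h\,\overline{x}\,\overline{g}=g\,y\overline{x}\,\overline{g},
\end{equation*}
and symmetrically $(gx\overline{h})\overline{(gy\overline{h})}=g\,x\overline{y}\,\overline{g}$, because the middle pair $\overline{h}h$ collapses to $1$. Subtracting and dividing by $2$ gives
\begin{equation*}
\mu_{\Hq}(gx\overline{h},gy\overline{h})=g\,\mu_{\Hq}(x,y)\,\overline{g},
\end{equation*}
so after passing to projective classes we get exactly the required identity.

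There is really no obstacle here: the argument relies only on associativity of $\Hq$ and on $h\overline{h}=1$, which causes the right-multiplication factor to drop out entirely. This last point is actually the conceptual content of the proposition, as it explains why $\nu_4$ intertwines the $G$-action on the source with an $H$-action on the target (factoring through $[g,h]\mapsto[g]$), and it will be the mechanism through which the equivariant rank theorem yields constant rank, and hence the submersion statement.
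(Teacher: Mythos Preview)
Your proof is correct and is essentially identical to the paper's own argument: both compute $\mu_{\Hq}(gx\overline{h},gy\overline{h})$ directly, using $\overline{gx\overline{h}}=h\overline{x}\overline{g}$ and associativity so that the inner factor $\overline{h}h=1$ collapses, yielding $g\,\mu_{\Hq}(x,y)\,\overline{g}$. The only difference is presentational---you make the intermediate step $\overline{gx\overline{h}}=h\overline{x}\overline{g}$ explicit, whereas the paper absorbs it into a single line.
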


\begin{proof}
  For $\alpha \in \mathrm{Sp}(1)\mathrm{Sp}(1)$, with $\alpha: \mathbb{H} \longrightarrow \mathbb{H}$, such that $\alpha(x) = gx\overline{h}$, we have 
\begin{equation*}
       \nu_4 (\alpha\langle x, y \rangle )   =  \nu_4(\langle gx\overline{h}, gy\overline{h}\rangle)    =   \frac{1}{2} ((gy\overline{h})(\overline{gx\overline{h}})-(gx\overline{h})(\overline{gy\overline{h}})).
\end{equation*}
Using the properties of the conjugation and the associativity of $\mathbb{H}$, we then obtain
\begin{equation*}
     \nu_4 (\alpha\langle x, y \rangle )   = \frac{1}{2} (gy(\overline{h}h)\overline{gx} - gx(\overline{h}h)\overline{gy})  
      =  \frac{1}{2} (g(y\overline{x}-x\overline{y})\overline{g}) \\
 =  \alpha\nu_4(\langle x, y \rangle)
\end{equation*}
for every $\langle x, y \rangle \in \mathrm{Gr}_2(\mathbb{R}^4)$.

\end{proof}

\begin{corollary}
      The map $\nu_4:\mathrm{Gr}_2(\R^4) \longrightarrow \R\mathrm{P}^2$ is a submersion.
\end{corollary}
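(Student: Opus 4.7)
The plan is to apply the equivariant rank theorem to $\nu_4$ using the $G=\mathrm{Sp}(1)\mathrm{Sp}(1)$-action just described, and then upgrade constant rank to full rank via surjectivity. The key input already in hand is the equivariance of $\nu_4$ proved in the previous proposition, together with the transitivity of $G\cong \mathrm{SO}(4)$ on $\mathrm{Gr}_2(\R^4)$ that was recalled before the proposition. The induced $G$-action on $\RP^2$ (defined via the homomorphism $G\to H$ and the standard $\mathrm{SO}(3)$-action on $S^2$) is a smooth action, which is the only property of the target action that the theorem requires.

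First I would explicitly invoke the equivariant rank theorem \cite[Thm.~9.7]{Lee}: since $\nu_4$ is a smooth map, the $G$-action on the domain is transitive and smooth, the $G$-action on the codomain is smooth, and $\nu_4$ is $G$-equivariant, the map $\nu_4$ has constant rank $r$ on $\mathrm{Gr}_2(\R^4)$. Next I would observe that $\nu_4$ is surjective. This can be read off from the proof of Proposition \ref{nu4-pi1}, where the explicit section
\[
\sigma\colon \RP^2\longrightarrow \mathrm{Gr}_2(\R^4),\qquad [x_1:x_2:x_3]\longmapsto \langle(1,0,0,0),(0,x_1,x_2,x_3)\rangle,
\]
satisfies $\nu_4\circ\sigma=\mathrm{Id}_{\RP^2}$, so $\nu_4$ is onto.

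To conclude, a smooth map of constant rank $r$ whose image contains an open subset of the target must satisfy $r=\dim \RP^2=2$, by the rank theorem (locally the image is an $r$-dimensional slice). Since $\nu_4$ is surjective onto the $2$-dimensional connected manifold $\RP^2$, its image is certainly not contained in any proper submanifold, forcing $r=2$. Hence the differential of $\nu_4$ is surjective at every point, i.e., $\nu_4$ is a submersion.

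The only step that requires any care is the passage from constant rank to maximal rank; the temptation is to skip the surjectivity argument and quote maximality directly, but the equivariant rank theorem only delivers constancy. Once surjectivity is in place (free from Proposition \ref{nu4-pi1}), this last step is immediate, so I do not expect any genuine obstacle.
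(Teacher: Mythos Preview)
Your argument is correct and follows exactly the paper's approach: at the start of the section it is explained that smoothness and surjectivity reduce the problem to showing constant rank, and constant rank is obtained from the equivariant rank theorem using the transitive $\mathrm{Sp}(1)\mathrm{Sp}(1)$-action and the equivariance established in the preceding proposition. Your extra remark justifying why constant rank plus surjectivity forces maximal rank is a welcome detail the paper leaves implicit.
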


 \subsection{The map $\nu_8:  \mathrm{Gr}_2(\R^8) \longrightarrow \R\mathrm{P}^6$}

As can be expected, a simple adaptation of the argument used for $\nu_4$, taking octonions instead of quaternions, will not work, due to the lack of associativity of octonionic multiplication. 

Our main tool will be the triality theorem of $\mathrm{Spin}(8)$; the Lie group acting transitively on $\mathrm{Gr}_2(\mathbb{R}^8)$ will be, in fact,  $\mathrm{Spin}(7)$ seen as a subgroup of $\mathrm{Spin}(8)$. We will present a brief account here, and, for further details, refer the reader to the beautiful exposition in \cite[Chapter 14]{Harvey}, where $\mathrm{Spin}(8)$ and its three inequivalent 8-dimensional representations are constructed making use of the octonions.

Let $V\subset \mathrm{End}_{\mathbb{R}}(\mathbb{O}\oplus \mathbb{O})$ denote the 8-dimensional Euclidean space defined by 
\begin{equation*}
V= \left\{ A(u) =  \begin{pmatrix} 0 & L_{u}\\ -L_{\bar{u}} & 0 \end{pmatrix} \, : \, u \in \mathbb{O} \right\}
\end{equation*}    
where $L_u$ denotes the left multiplication by $u$, and the norm on $V$ is defined by $\Vert A(u)\Vert =\Vert u \Vert.$

The identification of $V$ with $\mathbb{O}$ as Euclidean spaces allows for a very concrete description of $\mathrm{Spin}(8)$ as a matrix group inside $\mathrm{SO}(8)\times \mathrm{SO}(8)$ which we will adopt as our working definition.

\begin{definition}
A linear endomorphism $g=\begin{pmatrix}
    g_+ & 0 \\
    0 & g_-
\end{pmatrix} \in \mathrm{End}_\mathbb{R}(\mathbb{O}) \oplus \mathrm{End}_\mathbb{R}(\mathbb{O})$ is an element of $\mathrm{Spin}(8)$ if and only if $g_+,g_- \in \mathrm{SO}(8)$ and $\chi_g(u) \in \mathbb{O}  \,  (\text{with } \mathbb{O}\simeq V)$, for all $u\in \mathbb{O}$, where $\chi_g(u) = gA(u)g^{-1}$. 
\end{definition}

We remark that, for every $g \in \mathrm{Spin}(8)$, the map $\chi_g: \mathbb{O}\longrightarrow \mathbb{O}$ is in fact an orthogonal map and $\chi: \mathrm{Spin}(8) \longrightarrow \mathrm{SO}(8)$ is the vector representation of $\mathrm{Spin}(8)$. Moreover, we have an exact sequence of groups 
\begin{equation*}
1 \longrightarrow \mathbb{Z}_2 = \{ 1, -1 \} \longrightarrow \mathrm{Spin}(8) \stackrel{\chi}{\longrightarrow}\mathrm{SO}(8){\longrightarrow} 1
\end{equation*}
which means that $\chi$ is a double cover map. Letting $g_0=\chi_g$, and since $g$ determines $g_0$, we will use the notation $g=(g_+, g_-, g_0)$ for an element in $\mathrm{Spin}(8)$.

We warn the reader of the difference of convention in \cite[Def. 14.6]{Harvey}. 
This choice is such that the triality theorem is more adapted to our purposes. Compare with \cite[Thm. 14.19]{Harvey}
and the explanation in \cite[p. 279]{Harvey}.

\begin{theorem}[Triality theorem]
Let $g_+, g_-$ and $g_0$ be orthogonal linear maps $\mathbb{O} \longrightarrow \mathbb{O}$ with respect to the octonionic inner product.  The triplet $(g_+, g_-, g_0) \in \mathrm{Spin}(8)$ (that is, $g_0=\chi_{(g_+,g_-)}$) if and only if 
\begin{equation*}
    g_+(xy) = g_0(x)g_-(y),
\end{equation*}
 for every $x,y \in \mathbb{O}$.
\end{theorem}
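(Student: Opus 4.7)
The plan is to unfold the definition of $\chi_g$ via block-matrix multiplication, reduce the condition $gA(u)g^{-1}\in V$ to two identities on $\mathbb{O}$, and then observe that, once orthogonality is assumed, these two identities are in fact equivalent to the single triality identity.

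The first step is a direct block computation:
\begin{equation*}
g A(u) g^{-1} = \begin{pmatrix} 0 & g_+ L_u g_-^{-1} \\ -g_- L_{\bar{u}} g_+^{-1} & 0 \end{pmatrix}.
\end{equation*}
Consequently, $gA(u)g^{-1}=A(v)$ for some $v\in\mathbb{O}$ is equivalent to the pair $g_+ L_u g_-^{-1} = L_v$ and $g_- L_{\bar{u}} g_+^{-1} = L_{\bar{v}}$. Evaluating on arbitrary inputs and rewriting (with $x=u$), these become
\begin{equation*}
g_+(xz) = v\cdot g_-(z) \quad \text{and} \quad g_-(\bar{x}\, z) = \bar{v}\cdot g_+(z).
\end{equation*}
For the forward direction, the definition of $\mathrm{Spin}(8)$ forces $v=g_0(u)$, and the first identity is precisely the triality identity $g_+(xz)=g_0(x)g_-(z)$.

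For the converse, I need to show that, assuming only the triality identity together with orthogonality of $g_+,g_-,g_0$, the conjugate identity $g_-(\bar{x}z)=\overline{g_0(x)}\,g_+(z)$ also follows; this is the main obstacle, because one cannot move factors around freely in $\mathbb{O}$. My plan is to pair both sides of $g_+(xy)=g_0(x)g_-(y)$ against an arbitrary $w\in\mathbb{O}$ in the octonionic inner product and push operators across using adjoints. On the left, orthogonality of $g_+$ gives $\langle g_+(xy),w\rangle=\langle xy, g_+^{-1}(w)\rangle=\langle y,\bar{x}\,g_+^{-1}(w)\rangle$, using the adjoint relation $\langle ab,c\rangle=\langle b,\bar{a}c\rangle$, which in $\mathbb{O}$ follows from the trilinear associativity of the real part $\mathrm{Re}((ab)c)=\mathrm{Re}(a(bc))$. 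On the right, the analogous manipulation yields $\langle g_0(x)g_-(y),w\rangle=\langle y,g_-^{-1}(\overline{g_0(x)}\,w)\rangle$. Equating for all $y$ and setting $z=g_+^{-1}(w)$ delivers the conjugate identity.

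With both identities in place with $v=g_0(u)$, the block computation confirms $gA(u)g^{-1}=A(g_0(u))\in V$, so $(g_+,g_-)\in\mathrm{Spin}(8)$ with $\chi_g=g_0$. The hard part is precisely this passage from the triality identity to its conjugate form: in the non-associative setting it cannot be carried out by any direct algebraic manipulation of the product in $\mathbb{O}$, and genuinely requires the inner-product adjoint relations combined with the orthogonality of all three maps.
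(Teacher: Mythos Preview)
The paper does not prove this theorem itself; it is quoted from Harvey's \emph{Spinors and Calibrations} (with an adjustment of conventions) and no argument is given beyond the reference. So there is no in-paper proof to compare against.

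Your argument is correct and self-contained. The block computation of $gA(u)g^{-1}$ is right, and unwinding $gA(u)g^{-1}=A(v)$ to the pair of identities $g_+(u\,y)=v\,g_-(y)$ and $g_-(\bar u\,y)=\bar v\,g_+(y)$ is exactly what the definition says. For the converse, your passage from the triality identity to its conjugate twin via the adjoint relation $\langle ab,c\rangle=\langle b,\bar a\,c\rangle$ (equivalently $L_a^{\ast}=L_{\bar a}$, which one can also get by polarizing $\|ax\|=\|a\|\,\|x\|$) is precisely where orthogonality of all three maps is needed and where non-associativity would block any naive algebraic rearrangement. This is the standard line of proof.

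One small omission: the paper's definition of $\mathrm{Spin}(8)$ asks for $g_+,g_-\in\mathrm{SO}(8)$, while the theorem's hypothesis only says ``orthogonal''. Your converse establishes $gA(u)g^{-1}\in V$ but does not address determinants. Setting $y=1$ and $x=1$ in the triality identity gives $g_+=L_{g_0(1)}\circ g_-=R_{g_-(1)}\circ g_0$; since left and right multiplication by a unit octonion lie in $\mathrm{SO}(8)$ (the unit sphere $S^7$ is connected and $L_1=R_1=\mathrm{Id}$), this yields $\det g_+=\det g_-=\det g_0$. Ruling out the common value $-1$ is a separate routine check; depending on how literally one reads the statement, this is either a small gap to fill or an imprecision already present in the paper's formulation. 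Either way it is peripheral to the substance of your proof.
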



\begin{corollary}\label{cor:triality}
 For a triplet $(g_+, g_-, g_0) \in \mathrm{Spin}(8)$, we have 
\begin{equation*}
    g_0(x\overline{y}) = g_+(x)\overline{g_-(y)} 
\end{equation*}
 for every $x,y \in \mathbb{O}$. 
\end{corollary}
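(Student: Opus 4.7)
The plan is to reduce everything to the Triality Theorem by systematically invoking Artin's theorem: any subalgebra of $\mathbb{O}$ generated by two elements is associative. First, I would apply the triality identity $g_+(xy) = g_0(x)g_-(y)$ to the pair $(x\overline{y},\, y)$, obtaining
$$g_+\bigl((x\overline{y})y\bigr) \,=\, g_0(x\overline{y})\, g_-(y).$$
Since $x$ and $y$ generate an associative subalgebra of $\mathbb{O}$ (which also contains $\overline{y}$), one has $(x\overline{y})y = x(\overline{y}y) = \|y\|^2 x$, so by the $\mathbb{R}$-linearity of $g_+$ the left-hand side equals $\|y\|^2 g_+(x)$.

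Next, I would right-multiply the resulting equality $\|y\|^2 g_+(x) = g_0(x\overline{y})\, g_-(y)$ by $\overline{g_-(y)}$. Invoking Artin once more for the two elements $g_0(x\overline{y})$ and $g_-(y)$ of $\mathbb{O}$, we compute
$$\bigl(g_0(x\overline{y})\, g_-(y)\bigr)\,\overline{g_-(y)} \,=\, g_0(x\overline{y})\,\bigl(g_-(y)\,\overline{g_-(y)}\bigr) \,=\, g_0(x\overline{y})\,\|g_-(y)\|^2 \,=\, \|y\|^2\, g_0(x\overline{y}),$$
where the last equality uses that $g_- \in \mathrm{SO}(8)$ preserves the octonionic norm. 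Cancelling the common scalar $\|y\|^2$ (for $y \neq 0$; the case $y = 0$ is immediate from $\mathbb{R}$-linearity of both sides) yields the claimed identity $g_0(x\overline{y}) = g_+(x)\,\overline{g_-(y)}$.

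The only potential obstacle is the non-associativity of $\mathbb{O}$, but this is entirely handled by Artin's theorem: each intermediate product involves only two independent octonions (together with their conjugates and real scalars), so every reparenthesization we need is safe. Beyond the Triality Theorem itself, no further structural input about $\mathrm{Spin}(8)$ is required.
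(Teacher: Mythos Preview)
Your proof is correct and follows essentially the same route as the paper's: both apply the triality identity to the pair $(x\overline{y},y)$, use the two-generator associativity of $\mathbb{O}$ to simplify $(x\overline{y})y = \|y\|^2 x$, and then cancel $g_-(y)$ via its conjugate together with the orthogonality of $g_-$. The paper phrases the last step as multiplying by $(g_-(y))^{-1} = \overline{g_-(y)}/\|g_-(y)\|^2$ rather than right-multiplying by $\overline{g_-(y)}$ and cancelling, but this is only a cosmetic difference.
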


\begin{proof} Let $(g_+, g_-,g_0) \in \mathrm{Spin}(8)$, $x,y \in \mathbb{O}$ and consider $z=x\overline{y}$. We may assume that $y \neq 0$, otherwise the identity holds trivially. 
From the triality theorem, we have $g_+(zy)=g_0(z)g_-(y)$. Thus, $g_0(z) = g_+(zy)(g_-(y))^{-1}$.  Now, since $g_-$ is an orthogonal map, then $\Vert g_-(y) \Vert = \Vert y \Vert$. Hence,
\begin{equation*}
    g_0(z) = g_+(zy)\frac{\overline{(g_-(y))}}{\Vert g_-(y) \Vert^2} = g_+\left( \frac{zy}{\Vert y \Vert^2}\right)\overline{g_-(y)},
\end{equation*}
and recalling $z=x\overline{y}$, we obtain  $g_0(x\overline{y}) = g_+(x)\overline{g_-(y)}$.

\end{proof}

The group $\mathrm{Spin}(7)$ can be seen as a subgroup of $\mathrm{Spin}(8)$ as being the subgroup which fixes a vector $v$ in the vector representation of $\mathrm{Spin}(8)$. Concretely, choosing $v=1$,
\begin{equation*}
    \mathrm{Spin}(7) = \{g\in \mathrm{Spin}(8) : g_0(1) = 1\}.
    \end{equation*}
Using the triality theorem, we have an identification of $\mathrm{Spin}(7)$ inside $\mathrm{SO}(8)$ as
\begin{equation*}
    \mathrm{Spin}(7)= \{ g = (g_+, g_-, g_0) \in \mathrm{Spin}(8):\, g_+ = g_-\}. 
\end{equation*}

Another important Lie group in this context is $G_2$. This is the automorphism group of the algebra of octonions, more precisely
\begin{equation*}
    G_2=\{\varphi \in \mathrm{GL}(8,\mathbb{R}): \, \varphi(xy) = \varphi(x)\varphi(y), \, \forall x,y\in \mathbb{O} \}.
\end{equation*}

The triality theorem can also be used to give a description of $G_2$ as 
\begin{equation*}
    G_2 = \{g\in \mathrm{Spin}(7):\, g(1) =1\}.
\end{equation*}
 
The action of $\mathrm{Spin}(7)$ on $V_2(\mathbb{R}^8)$ given by 
\begin{equation*}
    g(x,y) = (gx,gy), \, \text{ for } g\in\mathrm{Spin}(7), (x,y)\in V_2(\mathbb{R}^8), 
\end{equation*}
is known to be transitive. This can be seen by showing that $\mathrm{Spin}(7)$ acts transitively on $S^7$ with stabilizer $G_2$ and that $G_2$ acts transitively on $S^6$. This action is linear and  descends to an action on $\mathrm{Gr}_2(\mathbb{R}^8)$. Moreover $\mathrm{Spin}(7)$ acts on $S^6$, and consequently on $\mathbb{R}\mathrm{P}^6$, via its vector representation $\chi: \mathrm{Spin}(7) \longrightarrow \mathrm{SO}(7)$.

\begin{proposition}
    The map $\nu_8:\mathrm{Gr}_2(\R^8) \longrightarrow \R\mathrm{P}^6$ is $\mathrm{Spin}(7)$-equivariant. 
\end{proposition}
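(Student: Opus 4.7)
The plan is to verify the equivariance identity $\nu_8(g\cdot\langle x,y\rangle)=g\cdot\nu_8(\langle x,y\rangle)$ directly, using Corollary \ref{cor:triality} as the main tool. An element $g=(g_+,g_-,g_0)\in\mathrm{Spin}(7)$ satisfies $g_+=g_-$, and by construction it acts on the Stiefel manifold $V_2(\mathbb{R}^8)$, and hence on $\mathrm{Gr}_2(\mathbb{R}^8)$, by the common map $g_+=g_-$, while its action on $\mathbb{R}\mathrm{P}^6$ is induced by the vector representation $\chi(g)=g_0\in\mathrm{SO}(7)$ (restricted to pure octonions). So the identity to establish reduces to
\[
\tfrac{1}{2}\bigl((g_+y)\overline{(g_+x)}-(g_+x)\overline{(g_+y)}\bigr)\;=\;g_0\!\left(\tfrac{1}{2}(y\bar{x}-x\bar{y})\right).
\]

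The key computational step is to apply Corollary \ref{cor:triality}, which states $g_0(x\bar y)=g_+(x)\overline{g_-(y)}$ for any $(g_+,g_-,g_0)\in\mathrm{Spin}(8)$. Since for $\mathrm{Spin}(7)$ we have $g_+=g_-$, this becomes
\[
(g_+y)\overline{(g_+x)}=g_+(y)\overline{g_-(x)}=g_0(y\bar x),
\qquad
(g_+x)\overline{(g_+y)}=g_0(x\bar y).
\]
Substituting into the definition of $\nu_8$ and using the $\R$-linearity of $g_0$ yields the identity above, which is precisely $g\cdot \nu_8(\langle x,y\rangle)$ in $\RP^6$.

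A minor point to justify beforehand is that the action of $\mathrm{Spin}(7)$ on $\RP^6$ described in the text (via $\chi$) does indeed send $[u]$ to $[g_0(u)]$: this follows from the fact that $g_0$ fixes the real line generated by $1$ (since $g\in\mathrm{Spin}(7)$ means $g_0(1)=1$) and therefore preserves the orthogonal complement of pure octonions, which is identified with $\R^7$. Once this is in place, the two uses of Corollary \ref{cor:triality} give the equivariance on the nose, and there is no remaining obstacle. Combined with the transitivity of the $\mathrm{Spin}(7)$-action on $\mathrm{Gr}_2(\mathbb{R}^8)$ already recorded and the Equivariant Rank Theorem, this proposition then yields the desired corollary that $\nu_8$ is a submersion.
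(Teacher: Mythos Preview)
Your proof is correct and follows essentially the same route as the paper: you apply Corollary~\ref{cor:triality} with $g_+=g_-$ to each of the two terms in $\mu_{\Oc}(g_+x,g_+y)$ and use the $\R$-linearity of $g_0$ to conclude. The only addition is your explicit remark that $g_0(1)=1$ forces $g_0$ to preserve the pure octonions, which the paper leaves implicit.
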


\begin{proof}
  For $\alpha = (g, g, g_0) \in \mathrm{Spin}(7)$, we have
\begin{equation*}
     \nu_8 (\alpha\langle x, y \rangle )   =  \nu_8(\langle g(x), g(y))   =   \frac{1}{2} [g(y)\overline{g(x)}-g(x)\overline{g(y)}]
\end{equation*}
and, by means of Cor. \ref{cor:triality}, we can write
\begin{equation*}
    \nu_8 (\alpha\langle x, y \rangle )    =  \frac{1}{2} [g_0(y\overline{x})-g_0(x\overline{y})]  
      =  \frac{1}{2} [g_0(y\overline{x}-x\overline{y})]  = \alpha\nu_8(\langle x, y \rangle)
\end{equation*}
for every $\langle x, y \rangle \in \mathrm{Gr}_2(\mathbb{R}^8)$. 

\end{proof}

\begin{corollary}
      The map $\nu_8:\mathrm{Gr}_2(\R^8) \longrightarrow \R\mathrm{P}^6$ is a submersion. 
\end{corollary}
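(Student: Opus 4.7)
The plan is to reproduce verbatim the argument used for $\nu_4$, now leaning on the two ingredients just assembled: $\mathrm{Spin}(7)$-equivariance of $\nu_8$ and transitivity of the $\mathrm{Spin}(7)$-action on $\mathrm{Gr}_2(\mathbb{R}^8)$. Concretely, I would first note that $\nu_8$ is smooth, since $\mu_{\Oc}$ is polynomial in the components of $x$ and $y$ and descends smoothly to the Grassmannian. Next, I would observe that $\nu_8$ is surjective: this is already built into Proposition \ref{prop:v8-fiber}, where for every $[u]\in \R\mathrm{P}^6$ the fibre $\nu_8^{-1}([u])$ is exhibited (and shown homeomorphic to $\C\mathrm{P}^3$), hence is non-empty.

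With these two facts in hand, I would apply the equivariant rank theorem \cite[Thm.~9.7]{Lee} exactly as in the $\nu_4$ case. The $\mathrm{Spin}(7)$-action on $\mathrm{Gr}_2(\mathbb{R}^8)$ is transitive, as recalled from the chain ``$\mathrm{Spin}(7)$ acts transitively on $S^7$ with stabilizer $G_2$, and $G_2$ acts transitively on $S^6$'', and the action on $\R\mathrm{P}^6$ through the vector representation $\chi:\mathrm{Spin}(7)\to \mathrm{SO}(7)$ is smooth. The preceding proposition establishes equivariance, so the theorem immediately yields that $\nu_8$ has constant rank on all of $\mathrm{Gr}_2(\mathbb{R}^8)$.

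It remains to upgrade constant rank to maximal rank. If the constant rank were $r<6=\dim \R\mathrm{P}^6$, then by the rank theorem the image of $\nu_8$ would locally be an $r$-dimensional submanifold of $\R\mathrm{P}^6$ and, in particular, have empty interior, contradicting surjectivity. Hence the rank equals $6$ at every point, and $\nu_8$ is a submersion.

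There is no genuine obstacle in this final step: all of the substantive work is in the previous proposition, where the triality theorem (via Corollary \ref{cor:triality}) is used to compensate for the non-associativity of $\Oc$ and produce the identity $\nu_8(\alpha\langle x,y\rangle)=\alpha\,\nu_8(\langle x,y\rangle)$. Once that is available, the deduction that $\nu_8$ is a submersion is purely formal.
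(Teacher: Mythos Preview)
Your proposal is correct and follows exactly the approach of the paper: the paper states at the outset of the section that smoothness and surjectivity reduce the problem to showing constant rank via the equivariant rank theorem, and the corollary is then immediate from the $\mathrm{Spin}(7)$-equivariance established in the preceding proposition. You have in fact spelled out more detail than the paper does (the smoothness justification, the surjectivity via Proposition~\ref{prop:v8-fiber}, and the constant-rank-to-maximal-rank step), but the logical skeleton is identical.
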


\subsection{An application}

Recall that in Prop. \ref{prop:homeom-hq} we showed that the map $\nu_4$ is a sphere bundle over $\RP^2$ with fibre $S^2$. In particular, $\nu_4$ is a locally trivial fibration. We conclude this section by proving that our other map $\nu_8$ is also a locally trivial fibration. This is a straightforward application of the well-known Ehresmann's lemma. \cite{Ehr-lemma}.

\begin{theorem}[Ehresmann's lemma]
    Let $M$ and $N$ be smooth manifolds and $f: M \longrightarrow N$ a smooth map. If $f$ is a proper map and also a submersion then $f$ is a locally trivial fibration.
\end{theorem}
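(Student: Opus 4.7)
The plan is to build, around each $q \in N$, a local trivialization of $f$ by transporting a neighborhood of $q$ through $M$ along the horizontal lifts of coordinate vector fields. To set this up, I would first equip $M$ with an Ehresmann connection, i.e., a smooth distribution $H \subset TM$ such that at every $m \in M$ one has $T_m M = H_m \oplus \ker(df_m)$. Since $f$ is a submersion, this is immediate: pick any Riemannian metric on $M$ and let $H_m$ be the orthogonal complement of $\ker(df_m)$; then $df_m\colon H_m \to T_{f(m)}N$ is a linear isomorphism at every $m$.

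Next, fix $q \in N$ and choose a chart $\varphi\colon U \to B \subset \R^n$ with $\varphi(q) = 0$ and $B$ an open ball around $0$. Let $\partial/\partial y^1, \ldots, \partial/\partial y^n$ be the standard coordinate vector fields on $B$, transported to $U$ via $\varphi^{-1}$, and let $\widetilde X^1, \ldots, \widetilde X^n$ be their unique horizontal lifts on $f^{-1}(U)$, characterized by $\widetilde X^i_m \in H_m$ and $df_m(\widetilde X^i_m) = \partial/\partial y^i$ for all $m \in f^{-1}(U)$. Denoting by $\Phi^i_t$ the flow of $\widetilde X^i$, I would then define
\begin{equation*}
    \Psi\colon B \times F_q \longrightarrow f^{-1}(U), \qquad \Psi(y, m) = \Phi^1_{y^1} \circ \Phi^2_{y^2} \circ \cdots \circ \Phi^n_{y^n}(m),
\end{equation*}
where $F_q = f^{-1}(q)$. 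By construction, $f \circ \Psi(y, m) = \varphi^{-1}(y)$, so this map respects the projection onto $U$.

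The key obstacle, and the reason properness is needed, is the global existence of the flows $\Phi^i_t$ for all $(t, m)$ in the relevant range. Without properness, a horizontal integral curve above a bounded path in $N$ could escape every compact set in finite parameter time. Here, however, $\overline{\varphi^{-1}(\overline{B'})}$ is compact for any ball $B' \Subset B$, so by properness $f^{-1}(\overline{\varphi^{-1}(\overline{B'})})$ is compact in $M$; any horizontal lift staying above $\overline{B'}$ remains in this compact set, so the standard escape-lemma argument guarantees the flows are defined on all of $B' \times F_q$. Shrinking $B$ if necessary, I may assume $\Psi$ is defined on all of $B \times F_q$.

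Finally, I would check that $\Psi$ is a diffeomorphism. The differential of $\Psi$ at $(0, m)$ sends each $\partial/\partial y^i$ to $\widetilde X^i_m \in H_m$ and restricts to the identity on $T_m F_q \subset \ker(df_m)$; since $H_m$ and $\ker(df_m)$ are complementary, $d\Psi_{(0,m)}$ is an isomorphism. Injectivity and surjectivity follow from the fact that horizontal lifts of a given smooth curve in $U$ are unique and foliate $f^{-1}(U)$, which can be seen by running the flows in reverse to define an inverse $f^{-1}(U) \to B \times F_q$. This produces the required smooth local trivialization, so $f$ is a locally trivial fibration.
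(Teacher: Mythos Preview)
The paper does not actually prove Ehresmann's lemma: it merely quotes the statement with a reference to \cite{Ehr-lemma} and then applies it to the maps $\nu_4$ and $\nu_8$. So there is no ``paper's own proof'' to compare against here.

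Your sketch is the standard argument and is essentially correct. Two small remarks. First, the extra closure in $\overline{\varphi^{-1}(\overline{B'})}$ is redundant, since $\varphi^{-1}$ is a homeomorphism and $\overline{B'}$ is already compact; what matters is that the horizontal curve projects into a compact subset of $N$, whose preimage is compact by properness, so the escape lemma applies. Second, you implicitly use that $f$ is surjective (or at least that its image is open and closed, hence a union of components of $N$), which follows from the submersion and properness hypotheses; over points not in the image the fibre is empty and there is nothing to prove. With those caveats, the sketch would compile into a full proof without difficulty.
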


The maps $\nu_4$ and $\nu_8$ are clearly smooth and automatically proper since Grassmannians are compact manifolds and projective spaces are Hausdorff. Since we already proved above that $\nu_4$ and $\nu_8$ are both submersions, this gives an alternative proof that $\nu_4$ is a locally trivial fibration, and for $\nu_8$, recalling also Prop. \ref{prop:v8-fiber}, we can state the following.

\begin{proposition}
The map $\nu_8: \Gr_2(\R^8) \longrightarrow \RP^6$ is a locally trivial fibration with fibre $\mathbb{C}\mathrm{P}^3$. 
\end{proposition}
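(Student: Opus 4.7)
The plan is to apply Ehresmann's lemma directly, since by this point all ingredients have already been established. I would begin by noting that $\nu_8$ is a smooth map between smooth manifolds: the Grassmannian $\Gr_2(\R^8)$ and the projective space $\RP^6$ both carry their standard smooth structures, and the defining formula $\langle x,y\rangle \mapsto [\tfrac{1}{2}(y\bar x - x\bar y)]$ is smooth (polynomial in the entries of $x,y$ followed by passage to the projective quotient).

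Next I would verify the two hypotheses of Ehresmann's lemma. Properness is immediate: $\Gr_2(\R^8)$ is compact and $\RP^6$ is Hausdorff, so the preimage of any compact set is a closed subset of a compact space, hence compact. The submersion hypothesis is exactly the content of the corollary established at the end of Section 4.2 via the equivariant rank theorem applied to the transitive $\mathrm{Spin}(7)$-action. Applying Ehresmann's lemma then gives that $\nu_8$ is a locally trivial fibration.

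Finally, to identify the fibre, I would invoke Proposition \ref{prop:v8-fiber}, which explicitly shows that for any $[u] \in \RP^6$ the fibre $\nu_8^{-1}([u])$ is homeomorphic to $S^7/S^1_u \cong \C\mathrm{P}^3$. Since a locally trivial fibration over a connected base has all fibres homeomorphic, and $\RP^6$ is connected, this identifies the typical fibre as $\C\mathrm{P}^3$, completing the proof.

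There is no real obstacle here: the proof is essentially a one-line citation of Ehresmann's lemma followed by a reference to the earlier fibre computation. The substantive work was done in establishing the submersion property (Section 4.2) and in constructing the explicit homeomorphism of Proposition \ref{prop:v8-fiber}; what remains is only to combine these two facts.
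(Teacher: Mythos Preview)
Your proposal is correct and matches the paper's own argument essentially line for line: the paper also invokes Ehresmann's lemma, notes smoothness, deduces properness from compactness of $\Gr_2(\R^8)$ and Hausdorffness of $\RP^6$, cites the submersion corollary from Section~4.2, and then appeals to Proposition~\ref{prop:v8-fiber} to identify the fibre as $\C\mathrm{P}^3$.
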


 \section{Other maps}\label{sec:other}
 In this section we construct the remaining maps mentioned in Theorem \ref{main}. We first explain why we cannot expect to generalize the construction of $\nu_4$ and $\nu_8$ to higher dimensions. An important point here is that both maps $\mu_{\Hq}$, $\mu_{\Oc}$ can be written as a linear combination of $p_{ij}$ in such a way that we have  \[||\mu_{\Hq}||^2=\sum_{0 \leq i<j \leq 3}p_{ij}^2  \quad \mbox{and} \quad ||\mu_{\Oc}||^2=\sum_{0 \leq i<j \leq 7}p_{ij}^2.\]

	For instance, for $\mu_\mathbb{H}$ we have that 
	\begin{equation*}
	||\mu_{\Hq}||^2 	 = 		(p_{01}+p_{23})^2+(p_{02}-p_{13})^2+(p_{03}+p_{12})^2 =
				\sum_{0 \leq i<j \leq 3}p_{ij}^2+2(p_{01}p_{23}-p_{02}p_{13}+p_{03}p_{12})
			\end{equation*}
and $p_{01}p_{23}-p_{02}p_{13}+p_{03}p_{12}=0$ from the well-known Pl\"ucker relation. A similar computation holds for $\mu_{\Oc}$. Recall that the Lagrange identity states that for any $x,y \in \R^n$ we have
			 \[||x||^2||y||^2-(x | y)^2=\sum\limits_{0 \leq i<j \leq n}p_{ij}^2\]
			and observe that this gives an alternative insight to the fact that $\mu_{\Hq}$ and $\mu_{\Oc}$ are well-defined.
    
	However, an analogous approach will not work for higher dimensions. Let us clarify:
	
	\begin{enumerate}[label=\alph*)]
		\item On one side, when trying to build a map $\Gr_2(\R^{n}) \longrightarrow \RP^{k}$, we know from Theorem \ref{KS} that for $2^s < n \leq 2^{s+1}$, $s \geq 1$, we cannot expect a map inducing a $\pi_1$-isomorphism for $k < 2^{s+1}-2$.\\
		
		\item On the other side, Hurwitz theorem states that if the equality $||x||^2||y||^2=||z||^2$ stands for every $x,y \in \R^n$ where $z=(z_{1},\ldots,z_{n}) \in \R^n$ and $z_{i}=\sum a_{ijk}x_{i}y_{j}$, $a_{ijk} \in \R$, then $n \in \{1,2,4,8\}$.\\
	\end{enumerate}

	Suppose now that we intend to build a map $\Gr_2(\R^{n}) \longrightarrow \RP^{k}$ for $n=2^m>8$. Then, as $2^{m-1}<n \leq 2^m$, we know by a) that $k \geq 2^{m}-2$. As the lower the $k$ the better, the ideal would be $k=2^{m}-2$. Suppose that there exists a map $\mu_n:\R^n\times \R^n\to \R^{k+1}=\R^{n-1}$ given by $\mu_{n}(x,y)=(\mu_{n}^1(x,y),\ldots, \mu_{n}^{k+1}(x,y))$ where each $\mu_n^l$ is a linear combination of $p_{ij}$ and $||\mu_n||^2=\sum_{0 \leq i<j \leq n-1}p_{ij}^2$.
Then $\mu_{n}^l(x,y)=\sum a_{ijk}x_{i}y_{j}$ for all $l \in \{1,\ldots,k+1\}$.
	Now, let $x,y \in \R^n$ and $z=((x | y),\mu_{n}^1(x,y),\ldots, \mu_{n}^{k+1}(x,y)) \in \R^{k+2}=\R^{n}$. Then $||z||^2=(x | y)^2+||\mu_{n}(x,y)||^2=||x||^2||y||^2$ (by Lagrange's identity). So, by b), $n \in \{1,2,4,8\}$. As $n>8$, we have a contradiction. \\

	This means that a change of strategy will be needed going forward.

	\subsection{The map $\nu_{16}:\Gr_2(\R^{16}) \longrightarrow \RP^{21}$}
	
	Now we are considering $\Gr_2(\R^{16})$ and, consequently, taking two vectors with 16 real entries each. Intuitively, we will ``split each of them in the middle'' and look at the parts as two octonions.
	
	\begin{theorem}\label{nu16}
		Consider $x,y \in \R^{16}$. Let $x=(x_{1},x_{2}), y=(y_{1},y_{2}) \in \R^8 \oplus \R^8$, then:
		\begin{align*}
			\nu_{16} \colon \Gr_2(\R^{16}) &\longrightarrow \RP^{21} \\
			\left\langle
			\begin{pmatrix}
				x_{1} \\
				x_{2}
			\end{pmatrix}
			,
			\begin{pmatrix}
				y_{1} \\
				y_{2}
			\end{pmatrix}
			\right\rangle
			&\longmapsto [\mu_{\Oc}(x_{1},y_{1}),\mu_{\Oc}(x_{2},y_{2}),x_{1}y_{2}-y_{1}x_{2}]
		\end{align*}
		is a map $\Gr_2(\R^{16}) \longrightarrow \RP^{21}$ inducing a $\pi_1$-isomorphism.
	\end{theorem}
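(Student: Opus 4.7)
The plan is to verify that $\nu_{16}$ is well defined on $\Gr_2(\R^{16})$ and that it induces an isomorphism on fundamental groups. First I would check that $\bigl(\mu_{\Oc}(x_1,y_1),\,\mu_{\Oc}(x_2,y_2),\,x_1y_2-y_1x_2\bigr)\in\R^{22}$ scales by the factor $\alpha\delta-\beta\gamma$ under any change of basis $(x,y)\mapsto(\alpha x+\beta y,\,\gamma x+\delta y)$, where we split $x=(x_1,x_2)$ and $y=(y_1,y_2)$ with $x_i,y_i\in\R^8$. The first two components behave as required by Proposition \ref{nu8_well _def}(i), and the third one by a direct distributivity computation
\[(\alpha x_1+\beta y_1)(\gamma x_2+\delta y_2)-(\gamma x_1+\delta y_1)(\alpha x_2+\beta y_2)=(\alpha\delta-\beta\gamma)(x_1y_2-y_1x_2)\]
which uses only the $\R$-bilinearity of octonionic multiplication, not associativity. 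Hence the class in $\RP^{21}$ is independent of the chosen basis of the plane.

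Next, I would show that the above triple cannot vanish when $x,y$ are linearly independent. By Proposition \ref{nu8_well _def}(ii), the vanishing of $\mu_{\Oc}(x_1,y_1)$ and $\mu_{\Oc}(x_2,y_2)$ forces $x_1,y_1$ and $x_2,y_2$ to be pairwise linearly dependent in $\R^8$. When both $x_1$ and $x_2$ are nonzero, write $y_1=\lambda_1 x_1$ and $y_2=\lambda_2 x_2$; the third component then becomes $(\lambda_2-\lambda_1)\,x_1x_2$, and since $\Oc$ is a division algebra we have $x_1x_2\neq 0$, whence $\lambda_1=\lambda_2$ and $y=\lambda_1 x$. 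The remaining cases $x_1=0$, $x_2\neq 0$ (and its symmetric) follow similarly: the third component reduces to $-y_1x_2$, so $y_1=0$ by the absence of zero divisors, and linear dependence follows from $y_2=\lambda_2 x_2$. Thus $\nu_{16}$ is well defined on $\Gr_2(\R^{16})$ and, being built from polynomial operations, continuous.

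For the $\pi_1$-isomorphism, I would exploit naturality with respect to $\nu_8$ via the commutative diagram
\[\xymatrix{
\Gr_2(\R^8) \ar[r] \ar[d]_-{\nu_8} & \Gr_2(\R^{16}) \ar[d]^-{\nu_{16}} \\
\RP^6 \ar[r] & \RP^{21}
}\]
in which the top arrow is induced by $x_1\mapsto(x_1,0)$ and the bottom arrow is the inclusion into the first seven coordinates. Commutativity is direct, since $\nu_{16}(\langle(x_1,0),(y_1,0)\rangle)=[\mu_{\Oc}(x_1,y_1),0,0]$ agrees with the image of $\nu_8(\langle x_1,y_1\rangle)$ under the bottom inclusion. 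Both horizontal maps induce $\pi_1$-isomorphisms by the standard stability results for Grassmannians and projective spaces (all four groups are $\Z_2$), and $\nu_8$ does so by Proposition \ref{nu8-pi1}; it follows that $\nu_{16}$ does too. I expect the main obstacle to be the non-vanishing step, because it requires combining information from all three components and crucially relies on $\Oc$ being a division algebra to exclude configurations where the two halves of $x$ and $y$ are proportional with different ratios; the scaling computation and the $\pi_1$-comparison are then essentially formal.
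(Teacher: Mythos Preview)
Your proposal is correct and follows essentially the same route as the paper: the basis-change invariance via Proposition~\ref{nu8_well _def}(i) and bilinearity, the non-vanishing via Proposition~\ref{nu8_well _def}(ii) combined with the division-algebra property of $\Oc$, and the $\pi_1$-isomorphism via the same commutative square with $\nu_8$. The only cosmetic difference is that the paper phrases the key step as $x_1(\lambda_2 x_2)=(\lambda_1 x_1)x_2\Rightarrow\lambda_1=\lambda_2$, while you write the third component as $(\lambda_2-\lambda_1)x_1x_2$; both rely on $x_1x_2\neq 0$.
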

	
	\begin{proof}
		Let us prove that this map is well defined. Firstly take another basis for the plane $\langle x,y \rangle$, i.e., $\{ \alpha x + \beta y, \gamma x +\delta y \}$ with $\alpha,\beta,\gamma,\delta \in \R$ such that $\alpha\delta-\beta \gamma\neq 0$.\\
		In terms of $x_{i},y_{i}$ we have to calculate $\nu_{16} \left( \left\langle
		\begin{pmatrix}
			\alpha x_{1} +\beta y_{1}\\
			\alpha x_{2} +\beta y_{2}
		\end{pmatrix}
		,
		\begin{pmatrix}
			\gamma x_{1} +\delta y_{1} \\
			\gamma x_{2} +\delta y_{2}
		\end{pmatrix}
		\right\rangle
		\right)$.\\
		
		From Proposition \ref{nu8_well _def}, we know that: $\mu_{\Oc}(\alpha x_{i} +\beta y_{i},\gamma x_{i} +\delta y_{i})=(\alpha \delta-\beta \gamma)\, \mu_{\Oc}(x_{i},y_{i})$ for $i \in \{1,2\}$.
		Thus it only remains to see what happens for the third component $x_{1}y_{2}-y_{1}x_{2}$. We have:\\
		$(\alpha x_{1} +\beta y_{1})(\gamma x_{2} +\delta y_{2})-(\gamma x_{1} +\delta y_{1})(\alpha x_{2} +\beta y_{2})=(\alpha \delta-\beta \gamma)(x_{1}y_{2}-y_{1}x_{2})$.\\
		So we can put the determinant of the basis change in evidence and get the same element of $\RP^{21}$.\\
		
		Suppose now that $(\mu_{\Oc}(x_{1},y_{1}),\mu_{\Oc}(x_{2},y_{2}),x_{1}y_{2}-y_{1}x_{2})=0$. Then $\mu_{\Oc}(x_{1},y_{1})=0$, $\mu_{\Oc}(x_{2},y_{2})=0$, and $x_{1}y_{2}=y_{1}x_{2}$ and we want to see that $x=(x_1,x_2)$ and  $y=(y_1,y_2)$ are linearly dependent.
		If both $x_{1},x_{2} \neq 0$, then, by Proposition \ref{nu8_well _def}, $y_{1}=\lambda_{1} x_{1}, \, y_{2}=\lambda_{2} x_{2}$ with $\lambda_{1},\lambda_{2} \in \R$ and therefore $x_{1}(\lambda_{2} x_{2})=(\lambda_{1} x_{1})x_{2}$. From this last equation $\lambda_{1}=\lambda_{2}$ and hence $y=\lambda_{1}x$.\\
		If $x_{1}=0$ then $y_{1}x_{2}=0$. We now have two cases:
		
		\begin{itemize}
			\item If $x_{2}=0$ then $x=0$ and $x,y$ are linearly dependent.
			
			\item If $x_2\neq 0$ then $y_{1}=0$ and $y_{2}=\lambda x_{2}$ with $\lambda \in \R$ (because $\mu_{\Oc}(x_{2},y_{2})=0$). We therefore have $y=(0,y_{2})=(0,\lambda x_{2})=\lambda x$.
		\end{itemize}
		
		The case $x_{2}=0$ is analogous. 
		Finally, note that the codomain of the map $$(x,y)\mapsto (\mu_{\Oc}(x_{1},y_{1}),\mu_{\Oc}(x_{2},y_{2}),x_{1}y_{2}-y_{1}x_{2})$$ can be restricted to a space of dimension 22 because the image of $\mu_{\Oc}$ is a 7-dimensional space and the last coordinate lies in an 8-dimensional space. We then get a well-defined map $\nu_{16}:\Gr_2(\R^{16})\to \RP^{21}$.
		
		In order to see that $\nu_{16}$ induces a $\pi_1$-isomorphism, we consider the following commutative diagram
		\[
		\xymatrix{\Gr_2(\R^{8}) \ar[d]_{\nu_8}\ar[r]& \Gr_2(\R^{16}) \ar[d]^{\nu_{16}}\\
			\RP^6\ar[r] & \RP^{21}		
	}
		\]
	in which the top horizontal map is induced by the inclusion $\R^8\to \R^{16}$, $(x_1,y_1)\mapsto ((x_1,0),(y_1,0))$ and the bottom horizontal map is induced by the inclusion $\R^7\to \R^{22}$, $z\mapsto (z,0)$. As is well-known, each of these two horizontal maps induces a $\pi_1$-isomorphism. Since we know that $\nu_{8}$ also induces a $\pi_1$-isomorphism, we can conclude that so does $\nu_{16}$.

	\end{proof}

	\subsection{Extensions and restrictions}\label{sec:res-ext}
	
	We are interested in finding maps, like the ones built before, where the dimension of the projective space is less than or equal to the one of the Grassmannian, that is maps $\Gr_2(\R^n) \longrightarrow \RP^k$ where $k \leq 2(n-2)$ and the lower the $k$ the better. In this section, we describe some maps that are obtained  by restriction and/or extension of the maps already presented. All of them will induce $\pi_1$-isomorphisms, as it can be checked through an argument as in the proof of Theorem \ref{nu16}.\\
	
	Taking restrictions of $\nu_{8}$ the dimension of the codomain does not change, however we still have reasonable dimensions compared to the dimension of the Grassmannian, namely we have maps: $\Gr_{2}(\R^7)\longrightarrow \RP^6$, $\Gr_{2}(\R^6)\longrightarrow \RP^6$, $\Gr_{2}(\R^5)\longrightarrow \RP^6$.\\
	As for $\nu_{16}$ a few restrictions are to notice. Looking at $\R^{16}$ as two copies of $\Oc$ we can restrict the second of these copies to $\Hq$ (considering only the first four elements), in this way $\mu_{\Oc}(x_{1},y_{1})$ is 7-dimensional, $\mu_{\Oc}(x_{2},y_{2})$ is 3-dimensional and $x_{1}y_{2}-y_{1}x_{2}$ remains 8-dimensional. This gives a total of 18 dimensions hence a map $\Gr_{2}(\R^{12})\longrightarrow \RP^{17}$.\\
	With  similar reasoning, restricting the second copy of $\Oc$ to $\C$ and $\R$ we obtain the maps $\Gr_{2}(\R^{10})\longrightarrow \RP^{15}$ and $\Gr_{2}(\R^{9})\longrightarrow \RP^{14}$ respectively. Obviously, as before for $\nu_{8}$, we have the following maps (without lowering the projective space dimension): $\Gr_{2}(\R^{15})\longrightarrow \RP^{21}$, $\Gr_{2}(\R^{14})\longrightarrow \RP^{21}$, $\Gr_{2}(\R^{13})\longrightarrow \RP^{21}$ and $\Gr_{2}(\R^{11})\longrightarrow \RP^{17}$.\\
	As for extensions, following the line of thinking that led to the construction of $\nu_{16}$ we have a map $\Gr_{2}(\R^{24})\longrightarrow \RP^{44}$, 
	$\left\langle
	\begin{pmatrix}
		x_{1} \\
		x_{2} \\
		x_{3}
	\end{pmatrix}
	,
	\begin{pmatrix}
		y_{1} \\
		y_{2} \\
		y_{3}
	\end{pmatrix}
	\right\rangle
	\longmapsto [\mu_{\Oc}(x_{1},y_{1}),\mu_{\Oc}(x_{2},y_{2}),\mu_{\Oc}(x_{3},y_{3}),P_{12},P_{13},P_{23}]$
	where $P_{ij}=x_{i}y_{j}-y_{i}x_{j}$ is an element of $\mathbb{O}$. Notice that $44 = 2(24-2)$ so, in this case, the dimension of the Grassmannian and the projective space coincide.
	Moreover, any similar extension of this map to higher dimensions of the Grassmannian will lead to a too high dimension of the projective space. For instance, already for $n=32$, we get a map $\Gr_2(\R^{32})\longrightarrow\RP^k$ with $k=75 \geq 60$.

	\section{On the Lusternik-Schnirelmann category of $\Gr_2(\R^n)$}\label{sec:LS-cat}
	In this section, we use some of the maps constructed before to obtain new estimates of the Lusternik-Schnirelmannn category of $\Gr_2(\R^n)$ for some specific values of $n$.
    
    We first recall that the Lusternik-Schnirelmann category of a map $f:X\to Y$, $\cat(f)$, is the least integer $k$ such that there exists a cover of $X$ given by $k+1$ open sets $U_i\subset X$ on each of which the map $f_{|U_i}:U_i\hookrightarrow Y$ is homotopically trivial. The LS-category of a space $X$ as defined in the introduction can then be seen as a special case of the category of a map, explicitly $\cat(X)=\cat({\rm Id}_X)$. We also recall that, if $R$ is a ring, a lower bound of $\cat(X)$ is given by the cuplength of $X$ over $R$, $\cuplength_R(X)$, that is, the length of the maximal nontrivial cup-product in $H^+(X;R)$. Moreover, if $X$ is a CW-complex, $\cat(X)$ is bounded above by the dimension of $X$. We then have for any CW-complex $X$
	\[\cuplength_R(X)\leq \cat(X)\leq \dim(X).\]
	We refer to \cite{CLOT} for more information on the LS-category.
    
	Let $n\geq 3$ be an integer. Recall that the dimension of the Grassmannian $\Gr_2(\R^{n})$ is $2(n-2)$. It is known (\cite{Hiller}, \cite{Stong}) that $\cuplength(\Gr_2(\R^n))=n+2^s-3$ when $2^s<n\leq 2^{s+1}$ with $s\geq 1$. If $n=2^{s}+1$ then $\cuplength(\Gr_2(\R^n))=\dim(\Gr_2(\R^n))$ and consequently $\cat(\Gr_2(\R^n))=n+2^s-3$. If $2^{s}+2\leq n \leq 2^{s+1}$, we can use the results of \cite{Berstein} to see that $\cat(\Gr_2(\R^n))\leq \dim(\Gr_2(\R^n))-1$. In particular, if $n=2^{s}+2$ then $\cat(\Gr_2(\R^n))= \dim(\Gr_2(\R^n))-1=n+2^s-3$ and 
	\[n+2^s-3\leq \cat(\Gr_2(\R^n))\leq 2n-5 \quad\mbox{ if } 2^s+3\leq n\leq 2^{s+1}.\]
	
	In order to improve this estimate we use the following theorem established by A. Dranishnikov:
	
	\begin{theorem}\cite{Dranish}\label{Dr} Let $X$ be a path-connected CW-complex with fundamental group $\pi=\pi_1(X)$. Let $\varphi_X:X\to B\pi$ a classifying map for the universal cover (that is, a map which induces a $\pi_1$- isomorphism) then
		\[\cat(X)\leq \disfrac{\cat(\varphi_X)+\dim(X)}{2}.\] 
		\end{theorem}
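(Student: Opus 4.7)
The approach is to follow Dranishnikov's argument, which combines categorical covers on the base and fibre of the classifying fibration $\tilde X \to X \to B\pi$ with a join-type construction that avoids the multiplicative counting of a naive refinement. Set $k = \cat(\varphi_X)$ and $n = \dim X$; the goal is $\cat(X) \le \lceil (k+n)/2 \rceil$.

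I would first unpack the hypothesis. By definition of $\cat(\varphi_X)$, there exists an open cover $U_0, \dots, U_k$ of $X$ such that each restriction $\varphi_X|_{U_i}\colon U_i \to B\pi$ is null-homotopic. Since $B\pi$ is a $K(\pi,1)$ and the universal cover $p\colon \tilde X \to X$ is the pullback of the path-loop fibration $E\pi \to B\pi$ along $\varphi_X$, the null-homotopy of $\varphi_X|_{U_i}$ is equivalent to the existence of a continuous lift $s_i\colon U_i \to \tilde X$ with $p\circ s_i$ the inclusion $U_i \hookrightarrow X$.

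Next, I would use the classical Ganea--Cornea upper bound for simply-connected CW complexes. The universal cover $\tilde X$ is simply-connected with $\dim \tilde X = n$, hence $\cat(\tilde X) \le n/2$; equivalently, there is an open cover $V_0,\dots, V_m$ of $\tilde X$, with $m \le n/2$, such that each inclusion $V_j \hookrightarrow \tilde X$ is null-homotopic.

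The core step combines the two covers above. The naive product refinement $\{U_i \cap s_i^{-1}(V_j)\}$ gives a cover of $X$ by sets null in $X$, but only yields $\cat(X) \le (k+1)(m+1)-1$; the standard Varadarajan-type additive bound for the fibration $\tilde X \to X \to B\pi$ improves this to $\cat(X) \le k + m \le k + n/2$, which is still weaker than the claim. To obtain the sharper bound $(k+n)/2$ one invokes Schwarz's characterization $\cat(f)+1 = \secat(p_f)$ of LS-category as the sectional category of the Ganea fibration, and combines the sections produced above via a fibrewise join. The key point is that two consecutive stages of the Ganea tower for $\mathrm{id}_X$ can be amalgamated using the full lift $s_i$ (and not merely a null-homotopy of $\varphi_X|_{U_i}$), effectively halving the contribution of $k$ and producing a section of the Ganea fibration at level $\lceil(k+n)/2\rceil$.

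The main obstacle is precisely this amalgamation step. The standard additive argument uses each $U_i$ only to witness the null-homotopy of $\varphi_X|_{U_i}$; the genuine new input enabling the factor $1/2$ in front of $\cat(\varphi_X)$ is that each $s_i$ is an actual lift, which allows pairs of consecutive Ganea stages to be collapsed via obstruction theory on the $n$-skeleton of $X$. Carrying this out rigorously — either through Dranishnikov's fibrewise join construction or via an explicit obstruction-theoretic extension along the skeletal filtration of $X$ — is the delicate heart of the proof.
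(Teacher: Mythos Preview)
The paper does not prove this theorem at all: it is quoted verbatim from Dranishnikov \cite{Dranish} and then applied as a black box in the subsequent Proposition. There is therefore no ``paper's own proof'' to compare your attempt against.

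As for the sketch itself, the first two steps (lifting each $U_i$ to $\tilde X$ via the null-homotopy of $\varphi_X|_{U_i}$, and using $\cat(\tilde X)\le \dim X/2$ for the simply-connected cover) are correct and standard. However, you correctly identify that these alone only recover the Varadarajan-type bound $\cat(X)\le \cat(\varphi_X)+\dim(X)/2$, not the stronger claimed inequality. The passage from there to $(\cat(\varphi_X)+\dim X)/2$ is the entire content of Dranishnikov's paper, and your description of it (``pairs of consecutive Ganea stages can be collapsed via obstruction theory'', ``fibrewise join construction'') is too vague to constitute a proof; you yourself flag this as ``the delicate heart of the proof'' without supplying it. In particular, it is not clear from what you wrote why having genuine lifts $s_i$ (rather than mere null-homotopies) buys a factor of $2$ on the $\cat(\varphi_X)$ term specifically. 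So the proposal is an accurate outline of where the difficulty lies, but it is not a proof: the decisive step is named rather than executed.
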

	
	Specializing to $\Gr_{2}(\R^n)$ we can state:
	
	\begin{proposition} Let $n\geq 3$. If there exists a map $\Gr_2(\R^n)\to \RP^k$ which induces a $\pi_1$-isomorphism, then 
		\[\cat(\Gr_{2}(\R^n))\leq \disfrac{k+2(n-2)}{2}.\] 
	\end{proposition}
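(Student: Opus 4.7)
The plan is to apply Dranishnikov's Theorem \ref{Dr} directly, with the given map (after post-composition with the inclusion $\RP^k\hookrightarrow \RP^\infty$) playing the role of the classifying map $\varphi_X$.

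First, let $f:\Gr_2(\R^n)\to \RP^k$ be a map inducing a $\pi_1$-isomorphism, and let $i:\RP^k\hookrightarrow \RP^\infty=B\Z_2$ be the standard inclusion into the infinite projective space. I would set $\varphi_X := i\circ f:\Gr_2(\R^n)\to \RP^\infty$. Since $i$ also induces a $\pi_1$-isomorphism (both spaces have fundamental group $\Z_2$ and $\RP^k\hookrightarrow \RP^\infty$ is the standard inclusion of a skeleton), the composition $\varphi_X$ induces a $\pi_1$-isomorphism; recalling that $\pi_1(\Gr_2(\R^n))=\Z_2$ for $n\geq 3$, this makes $\varphi_X$ a classifying map for the universal cover of $\Gr_2(\R^n)$, as required by Theorem \ref{Dr}.

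Next I would bound $\cat(\varphi_X)$ by $k$. The key observation is that $\cat(\RP^k)=k$, so there exists a cover of $\RP^k$ by $k+1$ open sets $V_0,\ldots,V_k$ each of which is contractible in $\RP^k$ (and hence, via $i$, contractible in $\RP^\infty$). Pulling back along $f$, the open sets $U_j:=f^{-1}(V_j)$ form a cover of $\Gr_2(\R^n)$ by $k+1$ sets, and the restriction $\varphi_X|_{U_j}$ factors as $U_j\to V_j\xrightarrow{i|_{V_j}} \RP^\infty$. Since $i|_{V_j}$ is null-homotopic, so is $\varphi_X|_{U_j}$. This gives $\cat(\varphi_X)\leq k$.

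Finally, I would plug into Dranishnikov's inequality. Using $\dim(\Gr_2(\R^n))=2(n-2)$ and the bound just obtained,
\[\cat(\Gr_2(\R^n))\leq \disfrac{\cat(\varphi_X)+\dim(\Gr_2(\R^n))}{2}\leq \disfrac{k+2(n-2)}{2},\]
which is precisely the claim. There is no genuine obstacle here: the only subtle point is verifying that the chosen $\varphi_X$ really is a classifying map (so that Dranishnikov's theorem applies), which follows from the hypothesis that $f$ induces a $\pi_1$-isomorphism combined with the standard fact that $\RP^k\hookrightarrow \RP^\infty$ does too.
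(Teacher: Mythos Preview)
Your proof is correct and follows essentially the same approach as the paper: compose the given map with the inclusion $\RP^k\hookrightarrow\RP^\infty$ to obtain a classifying map, bound its category by $\cat(\RP^k)=k$, and apply Dranishnikov's theorem. The only cosmetic difference is that the paper invokes the standard inequalities $\cat(g\circ f)\leq\min\{\cat(f),\cat(g)\}$ and $\cat(g)\leq\cat(Y)$ to get $\cat(\varphi_X)\leq k$, whereas you spell out the pullback of a categorical cover explicitly.
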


\begin{proof} Since $\pi=\pi_1(\Gr_2(\R^n)))=\Z_2$, we have $B\pi=\RP^{\infty}$ and a classifying map for the universal cover of $\Gr_2(\R^n)$ is just a map $\varphi:\Gr_2(\R^n)\to \RP^{\infty}$ which induces a $\pi_1$-isomorphism. Suppose that there exists a map $\Gr_2(\R^n)\to \RP^k$ which induces a $\pi_1$-isomorphism, then by composing with the inclusion $\RP^k \to \RP^{\infty}$ we obtain a classifying map $\varphi:\Gr_2(\R^n)\to \RP^{\infty}$. Now, it is well-known that for maps $f,g$, we have $\cat(f\circ g)\leq \min\{\cat(f),\cat(g)\}$ and that $\cat(f)\leq \min\{\cat(X),\cat(Y)\}$ whenever $f$ is a map from $X$ to $Y$. Since our map $\varphi$ factorizes through $\RP^k$ and $\cat(\RP^k)=k$,  we can conclude that $\cat(\varphi)\leq k$. The statement follows then from Theorem \ref{Dr}. 
	\end{proof} 
	
Using the maps $\Gr_2(\R^8)\to \RP^{6}$, $\Gr_2(\R^{12})\to \RP^{17}$ and $\Gr_2(\R^{16})\to \RP^{21}$ from Theorem \ref{main} and some of their restrictions, we then obtain the following new estimates of $\cat(\Gr_{2}(\R^n))$:

\begin{theorem} We have
		\[\begin{array}{|c|c|c|c|c|c|c|}
	\hline
	n & 7 & 8& 12& 14& 15& 16 \\ 
	\hline
 \cat(\Gr_{2}(\R^n)) & 8 & 9& [17,18]& [19,22]& [20,23]& [21,24] \\
 \hline 
\end{array}\]
	\end{theorem}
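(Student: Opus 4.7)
The plan is to combine the lower bounds already stated at the beginning of Section~\ref{sec:LS-cat} with upper bounds obtained from the Proposition just before the theorem, applied to the maps produced in Theorem~\ref{main} and in Section~\ref{sec:res-ext}.

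For each value of $n$ in the table, I would first record the lower bound $n + 2^s - 3$ (with $2^s < n \leq 2^{s+1}$) coming from the cuplength computation of Hiller--Stong; this gives $8, 9, 17, 19, 20, 21$ for $n = 7, 8, 12, 14, 15, 16$, respectively. For the upper bound, I would invoke the Proposition with the smallest available $k$ such that a $\pi_1$-isomorphism $\Gr_2(\R^n) \to \RP^k$ exists. Concretely, for $n=7$ and $n=8$ I would use the map $\nu_8:\Gr_2(\R^8)\to\RP^6$ (and for $n=7$ its restriction along the inclusion $\Gr_2(\R^7)\hookrightarrow \Gr_2(\R^8)$, which still induces a $\pi_1$-isomorphism by Theorem~\ref{main}); for $n=12$ I would use the map $\Gr_2(\R^{12})\to\RP^{17}$; and for $n=14,15,16$ the map $\Gr_2(\R^{16})\to\RP^{21}$ together with the appropriate restrictions described in Section~\ref{sec:res-ext}. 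Plugging into $\cat(\Gr_2(\R^n)) \leq \lfloor (k + 2(n-2))/2\rfloor$ yields the upper bounds $8, 9, 18, 22, 23, 24$, respectively.

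The arithmetic in each case is immediate: one computes $\dim(\Gr_2(\R^n)) = 2(n-2)$, adds $k$, divides by $2$, and takes the integer part since the LS-category is an integer. For $n=7$ and $n=8$ the lower and upper bounds coincide (both equal to $8$ and $9$ respectively), giving the two exact values; for the remaining values of $n$ the lower and upper bounds differ, producing the ranges displayed in the table.

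There is no real obstacle here beyond carefully pairing each $n$ with the best available map; the substantive work has already been done in Theorem~\ref{main} (construction of the maps), in the Proposition preceding the theorem (the Dranishnikov-type bound), and in the cited results of Hiller, Stong and Berstein (the lower bounds). The proof is therefore essentially a tabulation, and I would present it as a short case analysis verifying the six entries one by one.
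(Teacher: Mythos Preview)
Your proposal is correct and matches the paper's approach exactly: the paper's ``proof'' is precisely this tabulation, combining the Hiller--Stong cuplength lower bounds with the Dranishnikov-type upper bound from the preceding Proposition, applied to $\nu_8$, the map $\Gr_2(\R^{12})\to\RP^{17}$, and $\nu_{16}$ (together with their restrictions). One tiny slip: Berstein's result gives the older \emph{upper} bound $2n-5$, not a lower bound, but you do not actually use it in your computations, so nothing in the argument is affected.
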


	\bigskip
	
	\section*{Acknowledgments} 
	

All authors  acknowledge the support of CMAT (Centro de Matem\'atica da Universidade do Minho). Their research was financed by Portuguese Funds through FCT (Fundação para a Ciência e a Tecnologia, I.P.) within the Projects UIDB/00013/2020 and UIDP/00013/2020 with the references 
DOI: 10.54499/UIDB/00013/2020 (https://doi.org/10.54499/UIDB/00013/2020) and  DOI: 10.54499/UIDP/00013/2020 (https://doi.org/10.54499/UIDP/00013/2020).
LV thanks Mahir Bilen Can for interesting discussions on the theme of this article.
    
	\bigskip

	\bigskip\bigskip

	\bigskip

\begin{thebibliography}{WW}
		\footnotesize
		
		\bibitem{Berstein}{I. Berstein} On the Lusternik-Schnirelmann category of Grassmannians, \textit{Math. Proc. Cambridge Philos. Soc.} \textbf{79} (1976) 129--134.
		
		\smallskip
		
		\bibitem{CLOT}
		O. Cornea, G. Lupton, J. Oprea, and D. Tanr\'e,
		Lusternik-Schnirelmann category. 
		Mathematical Surveys and Monographs, 103. American Mathematical Society, 
		Providence, RI, 2003.
		
		\bibitem{Davis} D. Davis, Projective product spaces. \textit{J. Topol.} \textbf{3} no. 2 (2010), 265--279.
		
		\bibitem{Dranish}{ A. Dranishnikov,} An upper bound on the LS-category in presence of the fundamental group,  \textit{Algebr. Geom. Topol.} \textbf{19} (2019), 3601--3614.

        \bibitem{Ehr-lemma} C. Ehresmann, "Les connexions infinitésimales dans un espace fibré différentiable", Colloque de topologie (espaces fibrés), Bruxelles 1950, (1951) 29--55. 
		
		
       \bibitem{GHV} W. Greub, S. Halperin and R. Vanstone, Connections, curvature, and cohomology vol. II, NY: Academic Press, 1973.

        \bibitem{Harvey} F.R. Harvey, Spinors and Calibrations, Academic Press, Inc., San Diego, 1990.
		
		\bibitem{Hiller} H. Hiller, On the cohomology of real Grassmanians.
		\textit{Trans. Amer. Math. Soc.} \textbf{257} no. 2 (1980), 521--533.

        \bibitem{KS91} J. Korba\v{s} and P. Sankaran, On continuous maps between Grassmann manifolds, \textit{Proc. Indian Acad. Sci. (Math. Sci.)} \textbf{101} (1991), 111--120.

		
		\bibitem{Lam} K.Y. Lam, {Construction of non-singular bilinear maps}, \textit{Topology} {\bf 6} (1967), 423--426.
		

        \bibitem{Lee} J. Lee, Introduction to smooth manifolds, Springer-Verlag, New York, 2003. 
  
		\bibitem{Stong} R. Stong, Cup products in Grassmannians, \textit{Top. Appl.} \textbf{13} (1982), 103--113.

        
		
	\end{thebibliography}
\end{document}